\documentclass[pdflatex,sn-mathphys-num]{sn-jnl}

\usepackage{amsmath,amssymb,amsthm,graphicx,subfigure,float,caption,epstopdf,tabularx,color, bm,amsfonts,epic}
\usepackage{diagbox}
\usepackage{graphicx}%
\usepackage{multirow}%
\usepackage{amsmath,amssymb,amsfonts}%
\usepackage{amsthm}%
\usepackage{mathrsfs}%
\usepackage[title]{appendix}%
\usepackage{xcolor}%
\usepackage{textcomp}%
\usepackage{manyfoot}%
\usepackage{booktabs}%
\usepackage{algorithm}%
\usepackage{algorithmicx}%
\usepackage{algpseudocode}%
\usepackage{listings}%

\newtheorem{thm}{Theorem}[section]

\newtheorem{rmk}{Remark}[section]

\newtheorem{ex}{Example}[section]
\newtheorem*{prf}{Proof}
\numberwithin{equation}{section}
\renewcommand{\figurename}{\textbf{Fig.}}
\renewcommand{\tablename}{\textbf{Table.}}

\newcommand{\bma}{{\bm \alpha}}
\numberwithin{equation}{section}
\renewcommand{\figurename}{\textbf{Fig.}}
\renewcommand{\tablename}{\textbf{Table.}}
\theoremstyle{thmstyleone}%
\theoremstyle{thmstyletwo}%

\theoremstyle{thmstylethree}%

\begin{document}

\title[Article Title]{High-order multi-structures-preserving exponential integrators for the derivative nonlinear Schr\"{o}dinger equation}


\author[1]{\sur{Liping Wu}}\email{lipingwu1981@163.com}

\author[2]{\sur{Li Yang}}\email{hnynyl@163.com}

\author*[2]{\sur{Chaolong Jiang}}\email{Chaolong Jiang}

\affil[1]{\orgdiv{Faculty of Information Engineering and Automation}, \orgname{Kunming University of Science and Technology}, \orgaddress{\city{Kunming}, \postcode{ 650500}, \country{China}}}

\affil*[2]{\orgdiv{School of Statistics and Mathematics}, \orgname{Yunnan University of Finance and Economics}, \orgaddress{\city{Kunming}, \postcode{650221},  \country{China}}}



\abstract{This paper presents a novel class of high-order mass-, energy- and momentum-preserving exponential integrators for solving the derivative nonlinear Schr\"{o}dinger equation. Firstly, we reformulate the original system into an exponential supplementary variable system based on the idea of the exponential supplementary variable approach, and then the reformulated system is discretized by using the standard Fourier pseudo-spectral method in space and the high-order prediction and correction Lawson Runge-Kutta method in time, respectively. The proposed method is highly efficient, temporally high-order accurate, and simultaneously preserves the mass, energy and momentum in the discrete setting. Finally, numerical experiments validate the accuracy and energy-preserving properties.}

\keywords{Derivative Nonlinear Schr\"{o}dinger equation, exponential supplementary variable method, multi-structures-preserving, exponential integrator}



\maketitle

\section{Introduction}\label{sec1}
In this paper, we consider the derivative nonlinear Schr\"odinger (DNLS) equation\cite{CLP-Geo-1999,MOMT-JPSJ-1976,Mjolhus-JPP-1976} taking on the form
\begin{equation}
\label{DNLS-equation}
\left\{ \aligned
 &\text{i}\frac{\partial}{\partial t}\psi(x,t)=-\partial_{xx}\psi(x,t)-{\rm i}\partial_x\big(|\psi(x,t)|^2\psi(x,t)\big),\quad  t>0,\\
&\psi(x,0)=\psi_0(x), \quad \quad x \in \Omega,
\endaligned  \right.
\end{equation}
where $t$ is the time variable, $x$ is the spatial variable, $\text{i}=\sqrt{-1}$ is the complex unit, $\psi:=\psi({x},t)$ is the complex-valued wave function, and $\Omega=[a,b]\subset\mathbb{R}$ is a bounded domain with a periodic boundary condition. The DNLS equation \eqref{DNLS-equation} conserves the mass, energy and momentum as follows, \cite{BP-2022-InM,KN-JMP-1978}
\begin{align}\label{DNLS-mass}
&\mathcal{M}[\psi(\cdot,t)]=\int_a^b|\psi|^2 {\rm d}{x}\equiv\mathcal{M}[\psi(\cdot,0)],\\\label{DNLS-H-energy}
&\mathcal{E}[\psi(\cdot,t)] = \int_a^b\left(|\partial_x\psi|^2-\frac{3}{2}\Im(|\psi|^2\psi\partial_x(\psi^\ast))+\frac{1}{2}|\psi|^6\right){\rm d}{x}\equiv\mathcal{E}[\psi(\cdot,0)],\\
\label{DNLS-momentum}
&\mathcal{P}[\psi(\cdot,t)]=\int_a^b\left(\Im(\psi^\ast\psi_x)+\frac{1}{2}|\psi|^4\right) {\rm d}{x}\equiv\mathcal{P}[\psi(\cdot,0)],
\end{align}
where $\psi^\ast$ is the complex conjugation of the $\psi$, and $\Im (f)$ represents the imaginary part of $f$.

Extensive mathematical and numerical studies have been conducted for the DNLS \eqref{DNLS-equation} in the literature. Along the mathematical front, for the derivation of  well-posedness and existence of the solution of the DNLS \eqref{DNLS-equation}, we refer to \cite{BP-2022-InM,HO-PD-1992,PSS-DPDE-2017,Ozawa-IUMJ-1996} and the references therein. Along the numerical front, different efficient and accurate numerical methods including the predictor-corrector method \cite{GP-PF-1987}, the Ablowitz-Ladik method \cite{DF-JCP-1988}, the variational method \cite{HS-PS-2009} and the physics-informed neural network method \cite{PPC-WM-2021,PLC-ND-2021} have been developed for the DNLS \eqref{DNLS-equation}. However, these efficient numerical schemes usually cannot exactly  conserve the conservation laws of the DNLS \eqref{DNLS-equation}.

In \cite{LL-1995-SNA}, Li and Vu-Quoc pointed out:``\emph{... in some areas, the ability to preserve some invariant properties of the original differential equation is a criterion to judge the success of a numerical simulation.}" Thus, it is crucial to develop numerical schemes that conserve discretized versions of conservation laws of the continuous dynamical systems, and they are usually called energy-preserving or conservative schemes \cite{ELW06}. In \cite{Fla-JCP-1992}, an implicit numerical scheme that conserves the mass \eqref{DNLS-mass} is proposed. In \cite{HWZ-MCS-2024,Tsuchida-2002}, the integrable discretizations of derivative nonlinear
Schr\"odinger equations are investigated. Later on, Li et al. \cite{LLS-IJNSNS-2018,LLCL-IJMSSC-2017} presented two classes of mass-conserving Crank-Nicolson finite difference schemes.
Guo et al. \cite{GF-JAAC2021} introduced and analyzed a Crank-Nicolson finite difference scheme for the DNLS equation with the Riesz space fractional derivative. It can be proven rigorously in mathematics that the Crank-Nicolson scheme (CNS) can satisfy the discretized version of the mass \eqref{DNLS-mass}, and is second-order accuracy in time and space. Recently, Xue et al. \cite{XZ-pd-2024} proposed four classes of mass-conserving finite difference methods. However, all of existing energy-preserving schemes only have second-order accuracy in time, which may lose accuracy in practical computation for a given large time step. Additionally, they usually can only  conserve a single conservation law. Based on the basic principle where the numerical method should preserve the intrinsic properties of the original problems as much as possible\cite{FQ-shs-2003}, it is desirable to develop numerical methods that  exactly conserve multiple geometric/invariants properties of the system, called multi-structures-preserving (mSP) methods.

In the past few decades, many attempts to develop mSP schemes have been done on conservative systems. In \cite{CQT-CWM-2002,Reich-JCP-2000}, the Gauss collocation method \cite{ELW06} has been shown to be powerful in symplecticity, mass and symmetry preservation of the NLS-type equations. Nevertheless, the symplectic schemes usually cannot conserve the Hamiltonian energy of the NLS-type equations. In \cite{MQR-PTRSA-1999}, McLachlan et al. presented a procedure for preserving multiple invariants by a numerical integrator for ordinary differential equations (ODEs). The basic idea of their method is that the antisymmetric matrix of the dynamic system is replaced by an antisymmetric tensor taking discrete gradients of all invariants to be preserved as input. Calvo et al. \cite{CHMR06} proposed a directional projection method to preserve multiple invariants. Dahlby et al. \cite{DOY-JPA-2011} employed discrete gradients as the underlying tool to present a new methodology for preserving multiple invariants in ODEs. Based on the idea of the line integral method, Brugnano et al. \cite{BS-NA-2014} presented the concept of the enhanced line integral method for Hamiltonian ODEs that allows for the conservation of multiple invariants of the dynamical system. In recent years, the idea of the relaxation time stepping method is generalized and the concept of the multiple-relaxation method is presented in \cite{BK-JSC-2023,LL-SJNM-2024,LLZZ-JCP-2025,LZ-CPC-2026}, which is shown to be an efficient method for constructing multiple invariants-conserving schemes for the conservative systems. We note that these methods are presented and analysed on the ODE framework, which implies when generalizing these methods from conservative ODEs to conservative partial differential equations (PDEs), one first shall discretize PDEs in space to yield an ODE system with multiple invariants. However, as pointed out in \cite{LL-SJNM-2024}, it is generally difficult to preserve the mass, energy and momentum of the NLS-type equation simultaneously after the spatial discretization. Thus, how to construct mass-, energy- and momentum-conserving schemes for solving the DNLS \eqref{DNLS-equation} is challenging, which motivates this paper.

 In this paper, based on the idea of the exponential supplementary variable method (ESVM) \cite{GJZ2024} viewed as a promotion of the idea of the classical SVM method \cite{GHWCMAME2021,HWGCMAME2023}, we present a novel class of high-order multi-structures-preserving exponential integrators for solving the DNLS equation \eqref{DNLS-equation}. The basic idea
behind the ESVM method is first to reformulate the original system together with the mass, energy and momentum into a constraint-free system by introducing three additional supplementary variables. Then, the Lawson transformation \cite{BDL-2017-SUM,HO10an,JCQS-JSC-2022,Lawson1967} is employed to exactly integrate the linear term of the reformulated system, and an ESVM system is presented. Finally, we discretize the ESVM system using the standard Fourier pseudo-spectral method \cite{CQ-ETNA-2001,ST06} and high-order prediction-correction Lawson Runge-Kutta method for spatial and temporal variables, respectively. The proposed method is highly efficient, temporally high-order accurate, and can exactly preserve the mass, energy and momentum simultaneously. To our knowledge, this is the first temporally high-order multi-structures-preserving method for the DNLS equation \eqref{DNLS-equation}.

The structure of the paper is arranged as follows. In Section \ref{Sec:ESVM-DNLS:2}, based on the idea of the ESVM reformulation of the DNLS equation \eqref{DNLS-equation}, a class of fully-discrete high-order multi-structures-preserving exponential integrators is developed. Some theoretical analysis on the proposed semi-discrete scheme is conducted in Section \ref{DNLS-STA}. In Section \ref{Sec:ESVM-DNLS:4}, extensive numerical results are addressed to confirm the accuracy and energy-preserving properties of the new scheme. Finally, we conclude in Section \ref{Sec:ESVM-DNLS:5}.

\section{Numerical algorithms}\label{Sec:ESVM-DNLS:2}
 In this section, we present a class of mSP exponential integrators for computing the DNLS \eqref{DNLS-equation} based on the idea of the ESVM approach \cite{GJZ2024} and the standard Fourier pseudo-spectral method \cite{CQ-ETNA-2001,ST06}. The proposed scheme can conserve the mass, energy and momentum \eqref{DNLS-mass}-\eqref{DNLS-momentum} simultaneously, and achieves high-order accuracy in spatial and temporal directions, respectively.

 \subsection{Temporal discretization}\label{Sec:ESVM-DNLS:3}

 In this subsection, we employ the ESVM approach and the prediction-correction Lawson RK method to discretize the temporal
variable of the DNLS \eqref{DNLS-equation} and a class of semi-discrete mSP exponential integrators is proposed. To begin with, we rewrite \eqref{DNLS-equation} as
\begin{align}\label{DNLS-equi}
\partial_t \psi = \mathcal L\psi+\mathcal N[\psi],\quad \mathcal L={\rm i}\partial_{xx},\quad \mathcal N[\psi]=-\partial_x\big(|\psi|^2\psi\big).
\end{align}
Since the system \eqref{DNLS-equi} satisfies the energy, mass and momentum \eqref{DNLS-mass}-\eqref{DNLS-momentum}, we have
\begin{align}\label{DNLS-conservation1}
 \frac{d}{dt}\mathcal{E}=2\Re\Big(\frac{\delta \mathcal{E}}{\delta {\psi}^\ast},\mathcal L\psi+\mathcal N[\psi]\Big)=0,\quad  \frac{d}{dt}\mathcal{M}=2\Re\Big(\frac{\delta \mathcal{M}}{\delta {\psi}^\ast},\mathcal L\psi+\mathcal N[\psi]\Big)=0,
\end{align}
  and
 \begin{align}\label{DNLS-conservation2}
  \frac{d}{dt}\mathcal{P}=2\Re\Big(\frac{\delta \mathcal{P}}{\delta {\psi}^\ast},\mathcal L\psi+\mathcal N[\psi]\Big)=0,
  \end{align}
where, $\Re (f)$ represents the real part of $f$, $(f,g) = \int_a^b f(x)g(x)^\ast \mathrm{d}x$ denotes the inner product of $f(x),~g(x)$ in $L^2([a,b])$, and the corresponding norm in $L^2([a,b])$ is defined as $\| f\|^2 = (f,f)$.

 Then, let $t_{n} = n \tau$ and $t_{ni} = t_{n} + c_{i} \tau, n = 0, 1,\cdots,$ where $\tau $ be the time step, and denote $\psi^{n}$ and $\psi_{ni}$ as the numerical approximations of the function $\psi(x,t_{n})$ at $t_{n}$ and $t_{ni}$, respectively. We recall some common and useful results on the semi-discrete Lawson RK method and the prediction-correction Lawson Runge-Kutta (RK) method for system \eqref{DNLS-equi}, respectively. Assume $b_i,a_{ij}\ (i,j=1,\cdots,s)$ be real numbers and $c_i=\sum_{j=1}^sa_{ij}$, and an $s$-stage Lawson RK method applied to \eqref{DNLS-equi} is given by \cite{HO10an}
		\begin{equation}\label{shm:lawson-RK-method}
			\left\{
			\begin{aligned}
				&\psi_{ni} = \exp(c_i\tau\mathcal{L}) \psi^n + \tau\sum_{j=1}^{s}a_{ij}\exp\left((c_i-c_j)\tau\mathcal{L}\right) \mathcal N[\psi_{nj}],\\
				&\psi^{n+1}=\exp(\tau\mathcal{L}) \psi^n + \tau\sum_{i=1}^{s}b_i \exp((1-c_i)\tau\mathcal{L}) \mathcal N[\psi_{ni}].
			\end{aligned}
			\right.
		\end{equation}
	In general, the coefficients are displayed as follows:
	\begin{table}[h]
\begin{tabular}{c|ccc|c}
			${c_1}$ & $a_{11}$& $\cdots$&$a_{1s}\exp\big((c_1-c_s)\tau\mathcal{L}\big)$&$\exp(c_1\tau\mathcal{L})$\\
			${\vdots}$ &$\vdots$& $\ddots$&$\vdots$&$\vdots$\\
			${c_s}$ & $a_{s1}\exp\big((c_s-c_1)\tau\mathcal{L}\big)$& $\cdots$&$a_{ss}$&$\exp(c_s\tau\mathcal{L})$\\
			\hline
			& $b_1\exp\big((1-c_1)\tau\mathcal{L}\big)$ &$\cdots$&$b_s\exp\big((1-c_s)\tau\mathcal{L}\big)$&$\exp(\tau\mathcal{L})$\\
		\end{tabular}
		\caption{The coefficients of the Lawson RK method \eqref{shm:lawson-RK-method}.}\label{tab-4-6-EIM}
	\end{table}
	
	As stated in \cite{HO10an}, the above Lawson Runge-Kutta method \eqref{shm:lawson-RK-method} will be reduced to a classical RK method if $\mathcal L=0$, and
	the classical method will be henceforth referred to as the underlying Runge-Kutta method. Additionally, if the Lawson RK method \eqref{shm:lawson-RK-method} is implicit, one needs to solve a large fully nonlinear system at every time step, and thus it may be very time consuming in practical computations. To address this problem, we present the prediction-correction Lawson RK method. Let $b_i,a_{ij}\ (i,j=1,\cdots,s)$ be real numbers and $c_i=\sum_{j=1}^sa_{ij}$, and an $s$-stage prediction-correction Lawson RK method applied to \eqref{DNLS-equi} is defined as	
		\begin{itemize}
			\item    {\bf Prediction:} let $\psi_{ni}^{(0)} = \exp(c_i \tau \mathcal{L})\psi^n$ and $M$ be a given positive integer, we iteratively compute $\psi_{ni}^{(\ell+1)}$ from $\ell=0$ to $M-1$ as follows:
			\begin{equation}\label{eq:prediction-scheme}
				\psi_{ni}^{(\ell+1)} = \exp(c_i\tau\mathcal{L}) \psi^n + \tau\sum_{j=1}^{s}a_{ij}\exp\left((c_i-c_j)\tau\mathcal{L}\right) \mathcal{N}[\psi_{nj}^{(\ell)}].
			\end{equation}
			\item{\bf Correction:} update $\psi^{n+1,(M)}$ by
			\begin{equation}\label{eq:correction-scheme}
				\psi^{n+1,(M)}=\exp(\tau\mathcal{L}) \psi^n + \tau\sum_{i=1}^{s}b_i \exp((1-c_i)\tau  \mathcal{L})~\mathcal{N}[\psi_{ni}^{(M)}].
			\end{equation}
		\end{itemize}
The convergence order in local truncation error of the prediction-correction Lawson RK method has been carefully analyzed in \cite{GJZ2024}. However, we note that the prediction-correction prediction-correction Lawson RK method cannot conserve the mass, energy and momentum simultaneously of \eqref{DNLS-equation}. Below, we employ the idea of the ESVM approach and the prediction-correction Lawson RK method to construct a class of mass-, energy- and momentum-preserving  exponential integrators for computing the DNLS \eqref{DNLS-equation}.

  To this end, we first introduce three supplementary
variables $\beta_1$, $\beta_2$ and $\beta_3$ to incorporate the mass, energy and momentum \eqref{DNLS-mass}-\eqref{DNLS-momentum} into \eqref{DNLS-equi}, and have
 \begin{equation}\label{DNLS-SVM-Formulation}
\left\lbrace
\begin{aligned}
&\partial_t \psi = \mathcal L\psi+\mathcal N[\psi]+\beta_1 g_1[\psi]+\beta_2g_2[\psi]+\beta_3 g_3[\psi],\\\
&\frac{{\rm d}}{{\rm d}t}\mathcal E[\psi]=0,\quad\frac{{\rm d}}{{\rm d}t}\mathcal M[\psi]=0,\quad  \frac{{\rm d}}{{\rm d}t}\mathcal P[\psi]=0.
\end{aligned}\right.
  \end{equation}
where $g_1[\psi],\ g_1[\psi]$ and $g_3[\psi]$ are user-supplied functionals or functionals of $\psi$ given by
\begin{equation}\label{dNLS-projection-direction}
g_1[\psi]= \frac{\delta \mathcal E}{\delta {\psi}^\ast}=
              -\partial_{xx}\psi-3{\rm i}|\psi|^2\partial_x\psi+\frac{3}{2}|\psi|^4\psi,\ g_2[\psi] = \frac{\delta \mathcal M}{\delta {\psi}^*}=\psi,\  g_3[\psi]=\frac{\delta \mathcal P}{\delta {\psi}^\ast}=-{\rm i}\psi_x+|\psi|^2\psi.
\end{equation}
Then we employ the Lawson transformation to the system \eqref{DNLS-SVM-Formulation}, and obtain the following ESVM reformulation of \eqref{DNLS-equation}
\begin{equation}\label{DNLS-MSVM-Formulation}
\left\lbrace
\begin{aligned}
&\partial_t \Psi =  \exp(-\mathcal Lt)\mathcal N[\psi]+\sum_{j=1}^3\exp(-\mathcal Lt)\beta_j g_j[\psi],\\
&\frac{{\rm d}}{{\rm d}t}\mathcal E[\psi]=0,\quad\frac{{\rm d}}{{\rm d}t}\mathcal M[\psi]=0,\quad  \frac{{\rm d}}{{\rm d}t}\mathcal P[\psi]=0, \quad \psi=\exp(t\mathcal L)\Psi.
\end{aligned}\right.
  \end{equation}
Compared with the original system \eqref{DNLS-equi}, the ESVM system \eqref{DNLS-MSVM-Formulation} is an extended system since the additional supplementary variables $\beta_1$, $\beta_2$ and $\beta_3$ are introduced, and the consistence of \eqref{DNLS-MSVM-Formulation} and \eqref{DNLS-equi} will be discussed in the subsequent Section \ref{DNLS-STA}.

Subsequently, we employ the high-order prediction-correction Lawson Runge-Kutta method \eqref{eq:prediction-scheme} and \eqref{eq:correction-scheme} to discretize the system \eqref{DNLS-MSVM-Formulation} in time, and an $s$-stage  mass-, energy- and momentum-preserving mSP exponential integrator is presented.

\begin{itemize}
\item
   \textbf{High-order prediction:} Let ${\psi}^{(0)}_{ni}=\exp(c_i\tau\mathcal L) {\psi}^{n}$ and $M\in\mathbb{Z}^{+}$, we compute ${\psi}^{(\ell)}_{ni}$ iteratively as
\begin{align}\label{eq:ESVM-prediction-DNLS}
      {\psi}^{(\ell+1)}_{ni}=\exp(c_i\tau\mathcal L) {\psi}^{n}+\tau\sum_{j=1}^{s}a_{ij}\exp((c_i-c_j)\tau\mathcal L)\mathcal N[{\psi}_{nj}^{(\ell)}],\  \ell=0,1,\cdots,M-1.
\end{align}
\item\textbf{High-order correction:} update ${\psi}^{n+1}$ by
\begin{equation}\label{eq:ESVM-correction-DNLS}
\left\{
\begin{aligned}
&{k}_i=\mathcal N[{\psi}_{ni}^{(M)}]+\beta_1^ng_1[{\psi}^{(M)}_{ni}]+\beta_2^ng_2[{\psi}^{(M)}_{ni}]+\beta_3^ng_3[{\psi}^{(M)}_{ni}],\\
&{\psi}^{n+1}=\exp(\mathcal L\tau) {\psi}^{n}+\tau\sum_{i=1}^{s}b_i \exp((1-c_i)\tau\mathcal L) {k}_i,
\end{aligned}
\right.
\end{equation}
with suitable parameters $\beta_1^n, \beta_2^n$ and $\beta_3^n$ such that the energy, mass and momentum conservations
\begin{align}\label{eq:ESVM-conservations-DNLS}
\mathcal E[{\psi}^{n+1}]=\mathcal E[{\psi}^{n}],\quad \mathcal M[{\psi}^{n+1}]=\mathcal M[{\psi}^{n}],\quad \mathcal P[{\psi}^{n+1}]=\mathcal P[{\psi}^{n}]
\end{align}
hold true at $t_{n+1}$.
\end{itemize}

Finally, we show how to efficiently implement the proposed method \eqref{eq:ESVM-prediction-DNLS}-\eqref{eq:ESVM-correction-DNLS}. Let $\alpha_1=\tau\beta_1^n,\ \alpha_2=\tau\beta_2^n,\ \alpha_3=\tau\beta_3^n$, we can deduce from \eqref{eq:ESVM-prediction-DNLS} and \eqref{eq:ESVM-correction-DNLS}
\begin{equation}\label{eq:ESVM-Implement}
{\psi}^{n+1}=\widehat{\psi}^{n+1}+\alpha_1\Gamma_1^n+\alpha_2\Gamma_2^n+\alpha_3\Gamma_3^n,
\end{equation}
where
\begin{align}\label{eq:msp1}
&\widehat{\psi}^{n+1}=\exp(\tau\mathcal L){\psi}^{n}+\tau\sum_{i=1}^{s}b_i \exp((1-c_i)\tau\mathcal L)\mathcal N[{\psi}_{ni}^{(M)}],\\\label{eq:msp2}
& \Gamma_j^n=\sum_{i=1}^{s}b_i\exp((1-c_i)\tau\mathcal L)g_j[{\psi}^{(M)}_{ni}],\qquad j=1,2,3.
\end{align}
Inserting \eqref{eq:ESVM-Implement} into \eqref{eq:ESVM-conservations-DNLS}, we have
\begin{align}\label {eq:algebraic equation1}
&\mathcal G_1(\tau,\bma):=\mathcal E[\widehat{\psi}^{n+1}+\alpha_1\Gamma_1^n+\alpha_2\Gamma_2^n+\alpha_3\Gamma_3^n] -
\mathcal E[{\psi}^n]=0,\hfill\hfill\\
\label {eq:algebraic equation2}
&\mathcal G_2(\tau,\bma):=\mathcal M[\widehat{\psi}^{n+1}+\alpha_1\Gamma_1^n+\alpha_2\Gamma_2^n+\alpha_3\Gamma_3^n]-
\mathcal M[{\psi}^n]=0,\hfill\hfill\\
\label {eq:algebraic equation3}
&\mathcal G_3(\tau,\bma):=\mathcal P[\widehat{\psi}^{n+1}+\alpha_1\Gamma_1^n+\alpha_2\Gamma_2^n+\alpha_3\Gamma_3^n]-
\mathcal P[{\psi}^n]=0,
\end{align}
where $\bma=[\alpha_1,\alpha_2, \alpha_3]^{\top}$. Let
\begin{equation}\label{DNLS-G-alpha}
{\bf G}(\tau,\bma)=[\mathcal G_1(\tau,\bma),\mathcal G_2(\tau,\bma),\mathcal G_3(\tau,\bma)]^{\top},
 \end{equation}and the above algebraic equations can be solved by using Newton iteration method taking exact solution $\bma = {\bf 0}$ as the initial value, i.e.,
\begin{align}\label{sNLs-NI}
   \bma^{(\ell+1)}=\bma^{(\ell)}-[(J_\bma{\bf G})(\tau,\bma^{(\ell)})]^{-1}{\bf G}(\tau,\bma^{(\ell)}),\quad \bma^{(0)}= {\bf 0},\quad \ell=0,1,\cdots,
\end{align}\label{eq:mSP-NI}
 where the Jacobian of ${\bf G}(\tau,\bma)$ is explicitly given by
\begin{align}\label{DNLS-Jcobi-alpha}
(J_\bma{\bf G})(\tau,\bma):=2\left[\begin{array}{ccc}
 \Re\left(\frac{\delta \mathcal E}{\delta \psi^\ast},\Gamma_1^n\right)& \Re\left(\frac{\delta \mathcal E}{\delta \psi^\ast},\Gamma_2^n\right)& \Re\left(\frac{\delta \mathcal E}{\delta \psi^\ast},\Gamma_3^n\right)\vspace{1mm}\\
\Re\left(\frac{\delta \mathcal M}{\delta \psi^\ast},\Gamma_1^n\right) &  \Re\left(\frac{\delta \mathcal M}{\delta \psi^\ast},\Gamma_2^n\right)& \Re\left(\frac{\delta \mathcal M}{\delta \psi^\ast},\Gamma_3^n\right)\vspace{1mm}\\
\Re\left(\frac{\delta \mathcal P}{\delta \psi^\ast},\Gamma_1^n\right) &  \Re\left(\frac{\delta \mathcal P}{\delta \psi^\ast},\Gamma_2^n\right)& \Re\left(\frac{\delta \mathcal P}{\delta \psi^\ast},\Gamma_3^n\right)
\end{array} \right].
\end{align}

To sum up, from time $t_n$ to $t_{n+1}$,  the above semi-discrete \textbf{mSP}  algorithm is solved in {\bf Algorithm 1}.

\begin{table}[h]
		\begin{tabular}{l}\hline
			\\[-2.3ex]{{\bf Algorithm 1}: semi-discrete \textbf{mSP} algorithm}\\[0.5ex]
\hline\\[-8.5pt]
[{\bf High-order prediction}] \textbf{Input}: Given $a_{ij},\ b_i,\ 1\le i,j\le s$, $\psi^{n}$, $M \in \mathbb{Z}^+$, and ${\psi}^{(0)}_{ni}=\exp(c_i\tau\mathcal L) {\psi}^{n}$.\\[0.8ex]
{For $\ell = 0, \dots, M-1$, computing $\psi_{ni}^{(\ell+1)}$  by employing \eqref{eq:ESVM-prediction-DNLS}.}\\[0.8ex]
\textbf{Output}: $\psi_{ni}^{(M)},\quad i=1,2,\cdots,s$.\\[1ex]
\hline\\[-8.5pt]
[{\bf High-order correction}]  \textbf{Input}: Given $\psi^{n}$, $a_{ij},\ b_i,\ 1\le i,j\le s$ and $\psi_{ni}^{(M)}$.\\[0.8ex]

{1: Computing  $\widehat{\psi}^{n+1}$ and $\Gamma_j^n,\ j=1,2,3$ by \eqref{eq:msp1}-\eqref{eq:msp2}.}\\[0.8ex]

{2: ${\bm \alpha}$ is obtained by employing the Newton iteration method \eqref{sNLs-NI}.}\\[0.8ex]

{3: $\psi^{n+1}$ is obtained by computing \eqref{eq:ESVM-Implement}.} \\[1ex]
\textbf{Output}: $\psi^{n+1}$.\\[1ex]
\hline
		\end{tabular}
	\end{table}

\subsection{Spatial discretization}\label{Sec:ESVM-DNLS:3-2}
In this subsection, a fully discrete mass-, energy- and momentum-preserving exponential integrator is presented.
Due to the mass, energy and momentum being given explicitly in the ESVM system, it is more convenient and flexible to choose a suitable spatial discretization for the problem with the given boundary condition. Here we consider the periodic boundary condition, thus the Fourier spectral method \cite{CQ-ETNA-2001,ST06} is a good choice to approximate the wave function thanks to the spectral accuracy and the discrete Fast Fourier transform (FFT).

To begin with, we let $\Omega=[a,b]$, and choose the spatial step $h=(b-a)/N$ with an even positive integer $N$; denote $\psi_{j}^n$ and $(\psi_{j})_{ni}$ as the numerical approximations of  $\psi(x_j,t_n)$ and $\psi(x_j,t_{ni})$for $j=0,1,\cdots,N,\ n=0,1,\cdots,$ respectively. Let ${\bm \psi}^n=[\psi_{0}^n\ \psi_{1}^n\ \cdots\ \psi_{N-1}^n]^\top$
be the grid vector function, and for any two grid vector functions ${\bm \psi}^n$ and ${\bm \phi}^n$, we define the discrete inner product and $l^2$-norm as, respectively,
\begin{equation*}
({\bm \psi}^n,{\bm \phi}^n)_{h}=h\sum_{j=0}^{N-1} \psi_{j}^n(\phi_{j}^n)^\ast,\qquad
\|{\bm \psi}^n\|_{h}^2=({\bm \psi}^n,{\bm \psi}^n)_{h}.
\end{equation*}
Additionally, we also introduce another operator $``\odot"$ for element by element multiplication between two vector functions of same sizes as
\begin{equation*}
({\bm \psi}^n\odot{\bm \phi}^n)_{j}=({\bm \phi}^n\odot{\bm \psi}^n)_{j}=(\psi_{j}^n\cdot\phi_{j}^n),\qquad |{\bm \psi}|^2= {\bm \psi}^n\odot({\bm \psi}^n)^\ast.
\end{equation*}

Then, we denote the interpolation space as
\begin{align*}
&\mathcal V_N=\text{span}\{g_{j},\quad 0\leq j\leq N-1\},
\end{align*}where $g_{j}(x)$ is a trigonometric polynomial of degree $N/2$, given explicitly by
\begin{align*}
  &g_{j}(x)=\frac{1}{N}\sum_{l=-N/2}^{N/2}\frac{1}{c_{l}}e^{\text{i}l\mu (x-x_{j})},\qquad c_{l}=\left \{
 \aligned
 &1,\ |l|<\frac{N}{2},\\
 &2,\ |l|=\frac{N}{2},
 \endaligned
 \right.\qquad \mu=\frac{2\pi}{b-a}.
\end{align*}
According to \cite{ST06}, the interpolation operator is defined as $\mathcal{I}_{N}: C(\Omega)\to \mathcal V_N$:
\begin{align}\label{SP-interpolation}
\mathcal{I}_{N}\psi(x,t)=\sum_{j=0}^{N-1}\psi(x_j,t)g_{j}(x).
\end{align} Importantly, the terms $\partial_x\psi,\ \partial_{xx}\psi$ and $\exp(t\mathcal L)\psi$ ($\mathcal L={\rm i}\partial_{xx}$) are
approximated as follows
\begin{align*}
&\partial_x\psi(x_{j},t)\approx \frac{\partial \mathcal{I}_{N}\psi(x_{j},t)}{\partial x}
=({\bm D}_1{\bm \psi}(t))_j,\quad \partial_{xx}\psi(x_{j},t)\approx\frac{\partial^{2} \mathcal{I}_{N}\psi(x_{j},t)}{\partial x^2}
=({\bm D}_2{\bm \psi}(t))_j,\\
&\exp(t\mathcal L)\psi(x_{j},t)\approx \exp(t\mathcal L)\mathcal{I}_{N}\psi(x_{j},t)=(\exp(t\mathcal L_h){\bm \psi}(t))_j,\quad \mathcal L_h = {\rm i}{\bm D}_2,
\end{align*}
where ${\bm D}_1$ and ${\bm D}_2$ are the first-order and second-order spectral differentiation matrices, respectively, given by \cite{CQ-ETNA-2001}
\begin{align*}
({\bm D}_{1})_{j,k}=\left \{
 \aligned
 &\frac{1}{2}\mu (-1)^{j+k}\cot(\mu \frac{x_{j}-x_{k}}{2}),\ &j\neq k,\\
 &0,\quad \quad \quad \quad ~ &j=k,
 \endaligned
 \right.\
 ({\bm D}_{2})_{j,k}=
 \left \{
 \aligned
 &\frac{1}{2}\mu^{2} (-1)^{j+k+1}\csc^{2}(\mu \frac{x_{j}-x_{k}}{2}),\ &j\neq k,\\
 &-\mu^{2}\frac{N^{2}+2}{12},\quad \quad \quad \quad ~ &j=k.
 \endaligned
 \right.
 \end{align*}

\begin{rmk} We apply the standard Fourier pseudo-spectral method to approximate the mass-, energy and momentum \eqref{DNLS-mass}-\eqref{DNLS-momentum} in space, and then obtain the semi-discrete mass, energy and momentum as follows:
\begin{align*}
&\mathcal{M}_h[{\bm \psi}]=\|{\bm\psi}\|_h^2,\\
&\mathcal{E}_h[{\bm\psi}] = (-{\bm D}_2{\bm \psi},{\bm \psi})_h-\frac{3}{2}\Im(|{\bm\psi}|^2\odot{\bm\psi},{\bm D_1}{\bm \psi})_h+\frac{1}{2}(|{\bm\psi}|^6,{\bf 1})_h,\\
&\mathcal{P}_h[{\bm\psi}]=\Im({\bm D_1}{\bm\psi},{\bm \psi})_h+\frac{1}{2}(|{\bm \psi}|^4,{\bf 1})_h.
\end{align*}
\end{rmk}

\begin{rmk}\label{SP-rmk-2.2} According to \cite{ST06}, we have
   \begin{align*}
&{\bm D}_1=\mathcal{F}^{-1}\Lambda_{{\bm D}_1} \mathcal{F},\qquad \Lambda_{{\bm D}_1}=\text{\rm i}\mu\text{\rm diag}\Big[0,1,\cdots,\frac{N}{2}-1,0,-\frac{N}{2}+1,\cdots,-2,-1\Big],\\
& {\bm D}_2=\mathcal{F}^{-1}\Lambda_{{\bm D}_2} \mathcal{F},\qquad \Lambda_{{\bm D}_2}
=\big(\text{\rm i}\mu\big)^2\text{\rm diag}\Big[0,1,\cdots,\frac{N}{2},-\frac{N}{2}+1,\cdots,-2,-1\Big]^2,
\end{align*}
where $\mathcal{F}$ is the discrete Fourier transform (DFT) and $\mathcal{F}^{-1}$ represents the inverse discrete Fourier transform. Additionally, based on the above equality, we can deduce
\begin{equation*}
\exp(t\mathcal L_h) = \mathcal{F}^{-1}\exp({\rm i}\Lambda_{{\bm D}_2}t)\mathcal{F},
\end{equation*}
which implies the exponential matrix ($\exp(t\mathcal L_h)$) can be efficiently computed by using the discrete Fourier transform.
   \end{rmk}

 Subsequently, the standard Fourier pseudo-spectral method is employed to the proposed method \eqref{eq:ESVM-prediction-DNLS}-\eqref{eq:ESVM-correction-DNLS} for spatial discretizations, we obtain the following fully discrete scheme.
\begin{itemize}
\item
   \textbf{High-order prediction:} Let ${\bm\psi}^{(0)}_{ni}=\exp(c_i\tau\mathcal L_h) {\bm\psi}^{n},\ i=1,2,\cdots,s$ and $M\in\mathbb{Z}^{+}$, we compute ${\bm\psi}^{(\ell)}_{ni}$ iteratively as
\begin{align}\label{eq:msp-prediction-DNLS}
      &{\bm\psi}^{(\ell+1)}_{ni}=\exp(c_i\tau\mathcal L_h) {\bm\psi}^{n}+\tau\sum_{j=1}^{s}a_{ij}\exp((c_i-c_j)\tau\mathcal L_h) \mathcal N[{\bm \psi}_{ni}^{(\ell)}],\quad \ell=0,1,\cdots,M-1,
\end{align}
where $\mathcal N[{\bm \psi}_{ni}]=-{\bm D}_1\big(|{\bm \psi}_{ni}|^2\odot{\bm \psi}_{ni}\big)$.
\item\textbf{High-order correction:} update ${\bm \psi}^{n+1}$ by
\begin{equation}\label{eq:msp-correction-DNLS}
\left\{
\begin{aligned}
&{\bm k}_i=\mathcal N[{\bm \psi}_{ni}^{(M)}]+\beta_1^ng_1[{\bm\psi}^{(M)}_{ni}]+\beta_2^ng_2[{\bm\psi}^{(M)}_{ni}]+\beta_3^ng_3[{\bm\psi}^{(M)}_{ni}],\\
&{\bm\psi}^{n+1}=\exp(\tau \mathcal L_h) {\bm\psi}^{n}+\tau\sum_{i=1}^{s}b_i \exp((1-c_i)\tau\mathcal L_h) {\bm k} _i,
\end{aligned}
\right.
\end{equation}
with suitable parameters $\beta_1^n, \beta_2^n$ and $\beta_3^n$ such that the energy, mass and momentum conservations
\begin{align}\label{eq:ffESVM-conservations-DNLS}
\mathcal E_h[{\psi}^{n+1}]=\mathcal E_h[{\psi}^{n}],\quad \mathcal M_h[{\psi}^{n+1}]=\mathcal M_h[{\psi}^{n}],\quad \mathcal P_h[{\psi}^{n+1}]=\mathcal P_h[{\psi}^{n}]
\end{align}
hold true at $t_{n+1}$.
\end{itemize}

Similar to the proposed semi-discrete method \eqref{eq:ESVM-prediction-DNLS}-\eqref{eq:ESVM-correction-DNLS}, the proposed fully-discrete mSP scheme \eqref{eq:msp-prediction-DNLS}-\eqref{eq:msp-correction-DNLS} can be solved efficiently. Specifically, it follows from \eqref{eq:msp-prediction-DNLS}-\eqref{eq:msp-correction-DNLS} that
\begin{equation}\label{eq:fdNLS-Implement}
{\bm \psi}^{n+1}=\widehat{\bm\psi}^{n+1}+\alpha_1{\bm\Gamma}_1^n+\alpha_2{\bm\Gamma}_2^n+\alpha_3{\bm\Gamma}_3^n,
\end{equation}
where
\begin{align}\label{DNLS:feq:msp1}
&\widehat{\bm\psi}^{n+1}=\exp(\tau\mathcal L_h){\bm\psi}^{n}+\tau\sum_{i=1}^{s}b_i \exp((1-c_i)\tau\mathcal L_h)\left(-{\bf D}_1\big(|{\bm\psi}_{ni}^{(M)}|^2\odot{\bm\psi}_{ni}^{(M)}\big)\right),\\\label{DNLS:feq:msp2}
& {\bm\Gamma}_j^n=\sum_{i=1}^{s}b_i\exp((1-c_i)\tau\mathcal L_h)g_j[{\bm\psi}^{(M)}_{ni}],\qquad j=1,2,3,
\end{align}
Inserting \eqref{eq:fdNLS-Implement} into \eqref{eq:ffESVM-conservations-DNLS}, we obtain
\begin{align}\label{eq:DNLS-algebraic1}
&\mathcal G_{1,h}(\tau,\bma):=\mathcal E_h[\widehat{\bm\psi}^{n+1}+\alpha_1{\bm\Gamma}_1^n+\alpha_2{\bm\Gamma}_2^n+\alpha_3{\bm\Gamma}_3^n] -
\mathcal E_h[{\bm\psi}^n]=0,\hfill\hfill\\\label{eq:DNLS-algebraic2}
&\mathcal G_{2,h}(\tau,\bma):=\mathcal M_h[\widehat{\bm\psi}^{n+1}+\alpha_1{\bm\Gamma}_1^n+\alpha_2{\bm\Gamma}_2^n+\alpha_3{\bm\Gamma}_3^n]-
\mathcal M_h[{\bm\psi}^n]=0,\hfill\hfill\\\label{eq:DNLS-algebraic3}
&\mathcal G_{3,h}(\tau,\bma):=\mathcal P_h[\widehat{\bm\psi}^{n+1}+\alpha_1{\bm\Gamma}_1^n+\alpha_2{\bm\Gamma}_2^n+\alpha_3{\bm\Gamma}_3^n]-
\mathcal P_h[{\bm\psi}^n]=0.
\end{align}
The above algebraic equations is solved by employing the following Newton iteration method, as follows:
\begin{align*}
   \bma^{(\ell+1)}=\bma^{(\ell)}-[(J_{\bma,h}{\bf G}_h)(\tau,\bma^{(\ell)})]^{-1}{\bf G}_h(\tau,\bma^{(\ell)}),\quad \bma^{(0)}= {\bf 0},\quad \ell=0,1,\cdots,
\end{align*}
where ${\bf G}_h(\tau,\bma)=[\mathcal G_{1,h}(\tau,\bma),\mathcal G_{2,h}(\tau,\bma),\mathcal G_{3,h}(\tau,\bma)]^{\top}$  and \begin{align}\label{fully-dis-J-ah}
(J_{\bma,h}{\bf G}_h)(\tau,\bma):=2\left[\begin{array}{ccc}
 \Re\left(\nabla_{{\bm\psi}^\ast}\mathcal E_h,{\bm\Gamma}_{1}^n\right)_h& \Re\left(\nabla_{{\bm\psi}^\ast}\mathcal E_h,{\bm\Gamma}_{2}^n\right)_h& \Re\left(\nabla_{{\bm\psi}^\ast}\mathcal E_h,{\bm\Gamma}_{3}^n\right)_h\vspace{1mm}\\
\Re\left(\nabla_{{\bm\psi}^\ast}\mathcal M_h,{\bm\Gamma}_{1}^n\right)_h& \Re\left(\nabla_{{\bm\psi}^\ast}\mathcal M_h,{\bm\Gamma}_{2}^n\right)_h& \Re\left(\nabla_{{\bm\psi}^\ast}\mathcal M_h,{\bm\Gamma}_{3}^n\right)_h\vspace{1mm}\\
\Re\left(\nabla_{{\bm\psi}^\ast}\mathcal P_h,{\bm\Gamma}_{1}^n\right)_h& \Re\left(\nabla_{{\bm\psi}^\ast}\mathcal P_h,{\bm\Gamma}_{2}^n\right)_h& \Re\left(\nabla_{{\bm\psi}^\ast}\mathcal P_h,{\bm\Gamma}_{3}^n\right)_h
\end{array} \right].
\end{align}

From time $t_n$ to $t_{n+1}$, the proposed fully-discrete  mSP algorithm \eqref{eq:msp-prediction-DNLS}-\eqref{eq:msp-correction-DNLS} is solved in {\bf Algorithm 2}.
\begin{table*}[h]
\begin{tabular}{l}
\\\hline{\bf Algorithm 2}: Fully-discrete \textbf{mSP} algorithm
\\\hline
[{\bf High-order prediction}] \textbf{Input}: Given $a_{ij},\ b_i,\ 1\le i,j\le s$, ${\bm\psi}^{n}$, $M \in \mathbb{Z}^+$, and ${\bm\psi}^{(0)}_{ni}=\exp(c_i\tau\mathcal L_h) {\bm\psi}^{n}$.\\[0.8ex]
{For $\ell = 0, \dots, M-1$, computing ${\bm \psi}_{ni}^{(\ell+1)}$ by using \eqref{eq:msp-prediction-DNLS}.}\\[0.8ex]
\textbf{Output}: ${\bm\psi}_{ni}^{(M)},\qquad i=1,2,\cdots,s$.\\[1ex]
\hline\\[-8.5pt]
[{\bf High-order correction}]  \textbf{Input}: Given $a_{ij},\ b_i,\ 1\le i,j\le s$, ${\bm\psi}^{n}$ and ${\bm\psi}_{ni}^{(M)}$.\\[0.8ex]

1: Computing  $\widehat{\bm\psi}^{n+1}$ and ${\bm\Gamma}_j^n,\ j=1,2,3,$ by using \eqref{DNLS:feq:msp1}-\eqref{DNLS:feq:msp2}. \\

3: ${\bm\psi}^{n+1}$ is obtained by \eqref{eq:fdNLS-Implement}.  \\[1ex]
\textbf{Output}: ${\bm\psi}^{n+1}$.\\[1ex]
\hline
\end{tabular}
\end{table*}

\section{Some theoretical analysis}\label{DNLS-STA}
 In this section, we first investigate the consistence of \eqref{DNLS-MSVM-Formulation} and  \eqref{DNLS-equation}, and then show the existence and uniqueness of ${\bma}$ in \eqref{eq:algebraic equation1}-\eqref{eq:algebraic equation3}. Finally, we analyze the convergence order in local truncation error of \eqref{eq:ESVM-prediction-DNLS}-\eqref{eq:ESVM-correction-DNLS} and estimate the magnitudes of supplementary variables $\beta_1^n$, $\beta_2^n$ and $\beta_3^n$, respectively.

\subsection{Consistence}

As mentioned above, the ESVM system \eqref{DNLS-MSVM-Formulation} is an extended system, thus the consistence of \eqref{DNLS-MSVM-Formulation} and \eqref{DNLS-equi} plays a key role in the  construction of the mass-, energy- and momentum-preserving schemes for the system \eqref{DNLS-equi}. Importantly, we note that the ESVM reformulation \eqref{DNLS-MSVM-Formulation} is equivalent to the original system \eqref{DNLS-equation} as ${\bm\beta}=[\beta_1\ \beta_2\ \beta_3]^{\top}={\bm 0}$.
\begin{thm}\label{DNLS-thm-3.1}
Let \begin{equation*}\mathcal J[\psi] = 2\begin{bmatrix}
	\|\frac{\delta \mathcal{E}}{\delta {\psi}^\ast}\|^2 &
	\Re\left(\frac{\delta \mathcal{M}}{\delta {\psi}^\ast}, \frac{\delta \mathcal{E}}{\delta {\psi}^\ast} \right) &
	\Re\left(\frac{\delta \mathcal{P}}{\delta {\psi}^\ast}, \frac{\delta \mathcal{E}}{\delta {\psi}^\ast} \right)\\[7pt]
	\Re\left(\frac{\delta \mathcal{E}}{\delta {\psi}^\ast} , \frac{\delta \mathcal{M}}{\delta {\psi}^\ast}  \right)
	&\|\frac{\delta \mathcal{M}}{\delta {\psi}^\ast}\|^2& \Re\left(\frac{\delta \mathcal{P}}{\delta {\psi}^\ast}, \frac{\delta \mathcal{M}}{\delta {\psi}^\ast} \right)\\[7pt]
	\Re\left(\frac{\delta \mathcal{E}}{\delta {\psi}^\ast}, \frac{\delta \mathcal{P}}{\delta {\psi}^\ast}  \right)
	&\Re\left(\frac{\delta \mathcal{M}}{\delta {\psi}^\ast}, \frac{\delta \mathcal{P}}{\delta {\psi}^\ast} \right)& \|\frac{\delta \mathcal{P}}{\delta {\psi}^\ast}\|^2
\end{bmatrix},
\end{equation*}
then the ESVM reformulation \eqref{DNLS-MSVM-Formulation} is equivalent to the original system \eqref{DNLS-equation} if and only if $ \frac{\delta \mathcal E}{\delta {\psi}^\ast}$, $ \frac{\delta \mathcal M}{\delta {\psi}^\ast}$ and $ \frac{\delta \mathcal P}{\delta {\psi}^\ast}$ are linearly independent.
\end{thm}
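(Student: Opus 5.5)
The plan is to make the consistency statement concrete: differentiate the three constraints along solutions of the ESVM system, read off a linear system for $\bm\beta$, and observe that its matrix is exactly $\mathcal J[\psi]$.

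First, undo the Lawson transformation. Since $\psi=\exp(t\mathcal L)\Psi$, we have $\partial_t\psi=\mathcal L\psi+\exp(t\mathcal L)\partial_t\Psi$. The first line of \eqref{DNLS-MSVM-Formulation} is therefore equivalent to
\[
\partial_t\psi=\mathcal L\psi+\mathcal N[\psi]+\sum_{j=1}^3\beta_jg_j[\psi],
\]
which is \eqref{DNLS-SVM-Formulation}. Consequently, the ESVM system coincides with \eqref{DNLS-equi}, i.e.\ \eqref{DNLS-equation}, exactly when the constraints force $\bm\beta=\bm 0$. Conversely, if $\bm\beta=\bm 0$, then $\psi$ solves \eqref{DNLS-equi}, and the constraints hold by \eqref{DNLS-conservation1}--\eqref{DNLS-conservation2}.

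Second, impose the constraints. For $F\in\{\mathcal E,\mathcal M,\mathcal P\}$ we have $\frac{d}{dt}F=2\Re\big(\frac{\delta F}{\delta\psi^\ast},\partial_t\psi\big)$. Substituting the equation above and using \eqref{DNLS-conservation1}--\eqref{DNLS-conservation2} to remove the contribution of $\mathcal L\psi+\mathcal N[\psi]$ leaves
\[
0=\sum_{j=1}^3 2\Re\Big(\frac{\delta F}{\delta\psi^\ast},g_j[\psi]\Big)\beta_j .
\]
Here $g_1,g_2,g_3$ are the variational derivatives of $\mathcal E,\mathcal M,\mathcal P$ by \eqref{dNLS-projection-direction}. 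Taking $F=\mathcal E,\mathcal M,\mathcal P$ in turn gives the homogeneous linear system $\mathcal J[\psi]\bm\beta=\bm 0$, with $\mathcal J$ exactly the matrix in the statement. The entries match because $\Re(f,g)=\Re(g,f)$, so $\mathcal J$ is symmetric.

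Third, apply the Gram-matrix argument. $\mathcal J[\psi]$ is twice the Gram matrix of $\frac{\delta\mathcal E}{\delta\psi^\ast}$, $\frac{\delta\mathcal M}{\delta\psi^\ast}$, $\frac{\delta\mathcal P}{\delta\psi^\ast}$ for the real inner product $\Re(\cdot,\cdot)$ on $L^2$, viewed as a real Hilbert space. Hence $\bm\beta^\top\mathcal J\bm\beta=2\|\sum_j\beta_jg_j\|^2$ for real $\bm\beta$. It follows that $\mathcal J[\psi]$ is nonsingular if and only if the three functions are linearly independent over $\mathbb R$.

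\begin{itemize}
\item \emph{If direction.} Under independence, $\mathcal J[\psi]$ is invertible, so $\bm\beta=\bm 0$ and the two systems are equivalent.
\item \emph{Only-if direction.} If the functions are dependent, $\mathcal J[\psi]$ has a nontrivial kernel. For any nonzero $\bm\beta$ in it, $\sum_j\beta_jg_j=0$. The system still reduces to \eqref{DNLS-equi}, but $\bm\beta$ is not determined, so the extended system is not uniquely solvable and is not equivalent in the sense of determining $(\psi,\bm\beta)$.
\end{itemize}

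The main obstacle is making this last direction honest, because "equivalent" must be read as equivalence of the extended system for the unknowns $(\psi,\bm\beta)$. The proof should state explicitly that the supplementary variables $\beta_j$ are real-valued unknowns, and that equivalence means the unique solution of the extended system has $\bm\beta=\bm 0$. That interpretation also explains why linear independence is to be understood over $\mathbb R$.
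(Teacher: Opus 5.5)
Your proposal is correct and follows essentially the same route as the paper. Both differentiate the three constraints along the reformulated flow, cancel the $\mathcal L\psi+\mathcal N[\psi]$ contribution using the conservation identities, and reduce to $\mathcal J[\psi]\bm\beta=\bm 0$, whose only solution is $\bm\beta=\bm 0$ exactly when $\mathcal J[\psi]$ is nonsingular. Your extra steps justify points the paper's proof only asserts: undoing the Lawson transformation, the Gram-matrix argument with real $\beta_j$ and independence over $\mathbb R$, and reading ``equivalence'' as $\bm\beta$ being forced to vanish.
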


\begin{prf}
	With the use of Eqns \eqref{DNLS-conservation1}-\eqref{DNLS-conservation2} and the second equality of \eqref{DNLS-MSVM-Formulation}, we have
\begin{align*}
	0=\frac{\mathrm{d}}{\mathrm{d}t}\mathcal{E} &=
	2\Re\left( \partial_t \psi, \frac{\delta \mathcal{E}}{\delta {\psi}^\ast}    \right)  \\
	&=2\Re\left(\mathcal L\psi+\mathcal N[\psi]+\beta_1 g_1[\psi]+\beta_2g_2[\psi]+\beta_3 g_3[\psi], \frac{\delta \mathcal{E}}{\delta {\psi}^\ast}    \right)  \\
	&=2\beta_1\Re\left(g_1[\psi], \frac{\delta \mathcal{E}}{\delta {\psi}^\ast}    \right)+2\beta_2\Re\left(g_2[\psi], \frac{\delta \mathcal{E}}{\delta {\psi}^\ast}    \right)+2\beta_3 \Re\left(g_3[\psi], \frac{\delta \mathcal{E}}{\delta {\psi}^\ast}    \right).
\end{align*}
Analogously, we can get
\begin{align*}
0=\frac{\mathrm{d}}{\mathrm{d}t}\mathcal{M} &=
2\Re\left( \partial_t \psi, \frac{\delta \mathcal{M}}{\delta {\psi}^\ast}    \right)  \\\\
&=2\beta_1 \Re\left(g_1[\psi], \frac{\delta \mathcal{M}}{\delta {\psi}^\ast}    \right)+2\beta_2\Re\left(g_2[\psi], \frac{\delta \mathcal{M}}{\delta {\psi}^\ast}    \right)+2\beta_3 \Re\left(g_3[\psi], \frac{\delta \mathcal{M}}{\delta {\psi}^\ast}    \right).
\end{align*}
and \begin{align*}
0=\frac{\mathrm{d}}{\mathrm{d}t}\mathcal{P} &=
2\Re\left( \partial_t \psi, \frac{\delta \mathcal{P}}{\delta {\psi}^\ast}    \right)  \\\\
&=2\beta_1 \Re\left(g_1[\psi], \frac{\delta \mathcal{P}}{\delta {\psi}^\ast}    \right)+2\beta_2\Re\left(g_2[\psi], \frac{\delta \mathcal{P}}{\delta {\psi}^\ast}    \right)+2\beta_3 \Re\left(g_3[\psi], \frac{\delta \mathcal{P}}{\delta {\psi}^\ast}    \right).
\end{align*}
It follows from the above equations that
\begin{equation*}
J[\psi]{\bm \beta} = {\bf 0},
\end{equation*}
which implies that the above equations admit a unique zero solution if and only if $\mathcal J[\psi]$ is nonsingular. Note that $\mathcal J[\psi]$ is nonsingular if and only if  $ \frac{\delta \mathcal E}{\delta {\psi}^\ast}$, $ \frac{\delta \mathcal M}{\delta {\psi}^\ast}$ and $ \frac{\delta \mathcal P}{\delta {\psi}^\ast}$ are linearly independent. The proof is completed. \qed
\end{prf}

\subsection{Existence, uniqueness and local truncation error}

In this subsection, following by the ideas presented in \cite{CHMR06,GJZ2024,HWGCMAME2023}, we first show the existence and uniqueness of ${\bma}$ in \eqref{eq:algebraic equation1}-\eqref{eq:algebraic equation3}. Then, the convergence order in local truncation error of the semi-discrete mSP method \eqref{eq:ESVM-prediction-DNLS} and \eqref{eq:ESVM-correction-DNLS} will be explored. Finally, the estimates on the magnitudes of supplementary variable ${\bma}$ are carried out.
	\begin{thm}[Local existence and uniqueness of ${\bma}$]\label{ESVM:thm:dNLS} Suppose the Jacobian $(J_\bma {\bf G})(0,{\bm 0})$ (see \eqref{DNLS-Jcobi-alpha}) is nonsingular, there exists a $\tau^*>0$ such that the nonlinear algebraic equations \eqref{eq:algebraic equation1}-\eqref{eq:algebraic equation3} admit a unique function $\bma=\bma(\tau)$ for all $\tau\in[0,\tau^*]$.
	\end{thm}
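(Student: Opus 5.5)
The plan is to apply the implicit function theorem to the map ${\bf G}(\tau,\bma)$ defined in \eqref{DNLS-G-alpha}, viewed as a function of $(\tau,\bma)$ in a neighbourhood of $(0,{\bm 0})$.

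\textbf{Step 1: ${\bf G}(0,{\bm 0})={\bm 0}$.} When $\tau=0$, the prediction \eqref{eq:ESVM-prediction-DNLS} gives ${\psi}^{(\ell)}_{ni}={\psi}^n$ for all $\ell$ and $i$. Since $\exp(0\cdot\mathcal L)$ is the identity, \eqref{eq:msp1} yields $\widehat{\psi}^{n+1}=\psi^n$. Hence at $\bma={\bm 0}$ each of $\mathcal G_1,\mathcal G_2,\mathcal G_3$ in \eqref{eq:algebraic equation1}--\eqref{eq:algebraic equation3} reduces to $\mathcal E[\psi^n]-\mathcal E[\psi^n]$, and similarly for $\mathcal M$ and $\mathcal P$, so all three vanish.

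\textbf{Step 2: smoothness of ${\bf G}$.} I will show that ${\bf G}$ is continuously differentiable in $(\tau,\bma)$ near $(0,{\bm 0})$.
\begin{itemize}
\item The stages ${\psi}^{(M)}_{ni}$ are finite compositions of the unitary group $\exp(c\tau\mathcal L)$ with the polynomial-in-$(\psi,\psi^\ast,\partial_x\psi)$ nonlinearity $\mathcal N$. Under sufficient regularity of $\psi^n$ (say $\psi^n\in H^m$ with $m$ large), they depend smoothly on $\tau$.
\item The same holds for $\widehat\psi^{n+1}$ and $\Gamma_j^n$ in \eqref{eq:msp1}--\eqref{eq:msp2}, since the $g_j$ are polynomial in $\psi,\psi^\ast$ and derivatives.
\item ${\bf G}$ is a polynomial in $\bma$ with coefficients that are smooth functionals, because $\mathcal E,\mathcal M,\mathcal P$ are polynomial functionals of $\psi$ and its first derivative.
\item By the chain rule, $\partial_{\alpha_j}\mathcal G_k=2\Re\bigl(\tfrac{\delta \mathcal F_k}{\delta\psi^\ast}[\widehat\psi^{n+1}+\sum_l\alpha_l\Gamma_l^n],\Gamma_j^n\bigr)$, where $\mathcal F_1=\mathcal E$, $\mathcal F_2=\mathcal M$ and $\mathcal F_3=\mathcal P$. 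This is exactly \eqref{DNLS-Jcobi-alpha}.
\end{itemize}

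\textbf{Step 3: apply the implicit function theorem.} The Jacobian $(J_\bma{\bf G})(0,{\bm 0})$ is nonsingular by hypothesis. (At $\tau=0$ we have $\Gamma_j^n=\sum_i b_i g_j[\psi^n]=g_j[\psi^n]$ using $\sum_ib_i=1$, so this is the Gram-type matrix $\mathcal J[\psi^n]$ of Theorem \ref{DNLS-thm-3.1}, up to transposition.) The implicit function theorem then gives $\tau^*>0$, a neighbourhood $U$ of ${\bm 0}$, and a unique continuous (indeed $C^1$) function $\bma=\bma(\tau)\in U$ with ${\bf G}(\tau,\bma(\tau))={\bm 0}$ for $\tau\in[0,\tau^*]$ and $\bma(0)={\bm 0}$. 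For negative $\tau$ one may extend ${\bf G}$ smoothly, or simply note that the formulas make sense for $\tau<0$ because $\exp(\tau\mathcal L)$ is a group.

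\textbf{Main obstacle.} The main difficulty is Step 2: justifying the $C^1$ dependence on $\tau$ in the infinite-dimensional setting. Differentiating $\exp(c\tau\mathcal L)\psi$ in $\tau$ costs two spatial derivatives, and $\mathcal E$ involves $\partial_x\psi$. I would handle this by assuming $\psi^n$ is smooth (for instance in $H^m$ with $m$ large enough, or simply periodic $C^\infty$) and measuring everything in $H^1$, where $\mathcal E,\mathcal M,\mathcal P$ are $C^1$. Since ${\bf G}$ takes values in $\mathbb R^3$ and $\bma\in\mathbb R^3$, only finite-dimensional implicit function theory is needed once this scalar smoothness is established. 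Alternatively, in the fully discrete setting of Section \ref{Sec:ESVM-DNLS:3-2}, all operators are matrices and smoothness is immediate.
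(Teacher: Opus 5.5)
Your proposal is correct and takes the same route as the paper. The paper's proof applies the implicit function theorem to ${\bf G}(\tau,\bma)$, using ${\bf G}(0,{\bm 0})={\bm 0}$ and the nonsingularity of $(J_\bma{\bf G})(0,{\bm 0})$, and simply asserts that ${\bf G}$ is smooth; your explicit check that $\widehat{\psi}^{n+1}=\psi^n$ at $\tau=0$, your identification of the Jacobian with $\mathcal J[\psi^n]$ via $\sum_i b_i=1$, and your discussion of the regularity needed for $C^1$ dependence on $\tau$ supply details the paper leaves implicit.
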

	\begin{prf}With noting the smoothness of ${\bf G}(\tau,\bma)$ (see \eqref{DNLS-G-alpha}) and condition
		\begin{equation*}
			{\bf G}(0,{\bm 0})={\bm 0},\quad \Big|(J_\bma{\bf G})(0,{\bm 0})\Big|\not=0,
		\end{equation*}
		together with the implicit function theorem, there exists a $\tau^*>0$ such
		that the nonlinear algebraic system ${\bf G}(\tau,\bma)={\bm 0}$ admits a unique smooth function
		$\bma=\bma(\tau)$ satisfying $\bma({\bm 0})={\bm 0}$ and ${\bf G}(\tau,\bma(\tau))={\bm 0}$ for all $\tau\in[0,\tau^*]$.
	\end{prf}

	\begin{rmk}
		It is clear to see that $$(J_\bma{\bf G})(0,{\bm 0})= 2\begin{bmatrix}
	\|\frac{\delta \mathcal{E}}{\delta {\psi}^\ast}[\psi^n]\|^2 &
	\Re\left(\frac{\delta \mathcal{M}}{\delta {\psi}^\ast}[\psi^n], \frac{\delta \mathcal{E}}{\delta {\psi}^\ast}[\psi^n] \right) &
	\Re\left(\frac{\delta \mathcal{P}}{\delta {\psi}^\ast}[\psi^n], \frac{\delta \mathcal{E}}{\delta {\psi}^\ast}[\psi^n] \right)\\[7pt]
	\Re\left(\frac{\delta \mathcal{E}}{\delta {\psi}^\ast}[\psi^n] , \frac{\delta \mathcal{M}}{\delta {\psi}^\ast}[\psi^n]  \right)
	&\|\frac{\delta \mathcal{M}}{\delta {\psi}^\ast}[\psi^n]\|^2& \Re\left(\frac{\delta \mathcal{P}}{\delta {\psi}^\ast}[\psi^n], \frac{\delta \mathcal{M}}{\delta {\psi}^\ast}[\psi^n] \right)\\[7pt]
	\Re\left(\frac{\delta \mathcal{E}}{\delta {\psi}^\ast}[\psi^n] , \frac{\delta \mathcal{P}}{\delta {\psi}^\ast}[\psi^n]  \right)
	&\Re\left(\frac{\delta \mathcal{M}}{\delta {\psi}^\ast}[\psi^n], \frac{\delta \mathcal{P}}{\delta {\psi}^\ast}[\psi^n] \right)& \|\frac{\delta \mathcal{P}}{\delta {\psi}^\ast}[\psi^n]\|^2
\end{bmatrix} = \mathcal J[\psi^n],$$
		which implies that the condition for the existence and uniqueness of solution in semi-discrete scenario is consistent with that in continuous context (see Theorem \ref{DNLS-thm-3.1}).
	\end{rmk}

\begin{thm} Assume the solution $\psi(x,t)$ of the DNLS equation \eqref{DNLS-equation} is smooth enough and the underlying RK method is of order $p$ and $\Big|(J_\bma{\bf G})(0,{\bm 0})\Big|\not=0$, the semi-discrete mSP method \eqref{eq:ESVM-prediction-DNLS} and \eqref{eq:ESVM-correction-DNLS} can achieve order $\hat p=\min\{p,M+1\}$ in time.
\end{thm}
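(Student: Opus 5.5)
We work in the local truncation error setting: assume $\psi^n=\psi(\cdot,t_n)$ is exact and show $\|\psi^{n+1}-\psi(\cdot,t_{n+1})\|=\mathcal O(\tau^{\hat p+1})$. The splitting \eqref{eq:ESVM-Implement} gives
\[
\psi^{n+1}-\psi(t_{n+1})=\big(\widehat{\psi}^{n+1}-\psi(t_{n+1})\big)+\sum_{j=1}^3\alpha_j\Gamma_j^n .
\]
We bound the two pieces separately. First we show that the unconstrained prediction--correction Lawson RK step has local error $\mathcal O(\tau^{\hat p+1})$. Then we show that the correction parameters satisfy $\bma=\mathcal O(\tau^{\hat p+1})$, while $\Gamma_j^n=\mathcal O(1)$ because $\sum_i b_i=1$ and $\exp(t\mathcal L)$ is unitary.

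\emph{Step 1 (prediction--correction error).} Compare $\psi_{ni}^{(\ell)}$ with the stages $\psi_{ni}$ of the fully implicit Lawson RK method \eqref{shm:lawson-RK-method} started from the exact $\psi^n$. Subtracting \eqref{eq:ESVM-prediction-DNLS} from the stage equation and using the Lipschitz continuity of $\mathcal N$ on a bounded neighbourhood of the smooth solution gives
\[
\max_i\|\psi_{ni}^{(\ell+1)}-\psi_{ni}\|\le C\tau\max_j\|\psi_{nj}^{(\ell)}-\psi_{nj}\| .
\]
Here we use $\exp(t\mathcal L)$ unitary, but we must be careful because $\mathcal N$ contains $\partial_x$. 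We would measure in a higher Sobolev norm $H^m$, or work on a smooth-solution neighbourhood, and accept the loss of derivatives by assuming enough regularity. Since $\psi_{ni}^{(0)}-\psi_{ni}=\mathcal O(\tau)$, induction gives $\psi_{ni}^{(M)}-\psi_{ni}=\mathcal O(\tau^{M+1})$. Substituting into \eqref{eq:msp1} and comparing with the Lawson RK update then gives $\widehat\psi^{n+1}-\psi^{n+1}_{\rm LRK}=\mathcal O(\tau^{M+2})$. The Lawson RK method inherits order $p$ from its underlying RK method \cite{HO10an}, so
\[
\widehat\psi^{n+1}-\psi(t_{n+1})=\mathcal O(\tau^{p+1})+\mathcal O(\tau^{M+2})=\mathcal O(\tau^{\hat p+1}),
\]
which is the bound quoted from \cite{GJZ2024}.

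\emph{Step 2 (size of $\bma$).} Step 2 is the main obstacle. By Theorem \ref{ESVM:thm:dNLS}, $\bma(\tau)$ exists, is smooth, and satisfies $\bma(0)=\mathbf 0$. Write
\[
\mathbf G(\tau,\bma)=\mathbf G(\tau,\mathbf 0)+\Big(\int_0^1 J_\bma\mathbf G(\tau,\theta\bma)\,d\theta\Big)\bma .
\]
Since the exact flow conserves $\mathcal E,\mathcal M,\mathcal P$, we have
\[
\mathcal G_k(\tau,\mathbf 0)=\mathcal F_k[\widehat\psi^{n+1}]-\mathcal F_k[\psi(t_{n+1})],
\]
and by the mean value theorem and Step 1 this is $\mathcal O(\tau^{\hat p+1})$ for the smooth functionals $\mathcal F_k$. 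The averaged Jacobian tends to $(J_\bma\mathbf G)(0,\mathbf 0)=\mathcal J[\psi^n]$, which is nonsingular by assumption. Hence, for $\tau$ small, its inverse is uniformly bounded, and $\mathbf G(\tau,\bma)=0$ yields $|\bma|\le C\,|\mathbf G(\tau,\mathbf 0)|=\mathcal O(\tau^{\hat p+1})$. The delicate point is uniformity: we need the Jacobian to stay invertible with a bounded inverse along the segment. This follows from continuity of $J_\bma\mathbf G$ in $(\tau,\bma)$ and $\bma(\tau)\to0$, but the constants must be tracked through the energy functional, which involves derivatives; again this is handled by the smoothness hypothesis.

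\emph{Step 3 (conclusion).} Combining the two steps,
\[
\|\psi^{n+1}-\psi(t_{n+1})\|\le \mathcal O(\tau^{\hat p+1})+|\bma|\max_j\|\Gamma_j^n\|=\mathcal O(\tau^{\hat p+1}),
\]
so the method has order $\hat p=\min\{p,M+1\}$. As a by-product, $\beta_j^n=\alpha_j/\tau=\mathcal O(\tau^{\hat p})$, which gives the estimate on the magnitude of the supplementary variables announced at the start of the section.
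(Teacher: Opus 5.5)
Your proposal is correct and follows the paper's proof essentially step for step. The paper also takes $\widehat\psi^{n+1}-\psi(t_{n+1})=\mathcal O(\tau^{\hat p+1})$ as its first step, citing Theorems 2 and 3 of \cite{GJZ2024} where you sketch it. It then turns this into $\mathbf G(\tau,\mathbf 0)=\mathcal O(\tau^{\hat p+1})$ via exact conservation and Taylor expansion, and linearizes $\mathbf G$ about $\bma=\mathbf 0$ with $(J_\bma\mathbf G)(\tau,\mathbf 0)=(J_\bma\mathbf G)(0,\mathbf 0)+\mathcal O(\tau)$ to get $\bma=\mathcal O(\tau^{\hat p+1})$. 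Your integral mean-value form is a slightly cleaner version of the paper's expansion with an $\mathcal O(|\bma|^2)$ remainder. The one soft spot is your own Step 1 sketch: it presupposes that the fully implicit Lawson RK stages exist with $\tau$-uniform high Sobolev bounds, which smoothness of $\psi$ alone does not give, since $\mathcal N$ loses a derivative and the stage equations are not a contraction. So either cite the prediction--correction result as the paper does, or Taylor-expand the explicit iterates $\psi_{ni}^{(\ell)}$ directly against the exact solution.
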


\begin{prf}
		We note that $\widehat{\psi}^{n+1}$ satisfies
		\begin{equation}\label{eq:ESVM-hat-psi-n}
			\widehat{\psi}^{n+1}=\exp(\tau\mathcal L){\psi}^{n}+\tau\sum_{i=1}^{s}b_i \exp((1-c_i)\tau\mathcal L)\mathcal N[{\psi}_{ni}^{(M)}]
		\end{equation}
		With the aid of Theorems 2 and 3 in \cite{GJZ2024}, we have
		\begin{equation}\label{eq:ESVM-hat-psi}
			\widehat{\psi}^{n+1}=\psi(t_{n+1})+\mathcal{O}(\tau^{\min\{p,M+1\}+1}).
		\end{equation}
		we then employ the Taylor expansion to obtain
		\begin{align*}
			\mathcal E[\widehat{\psi}^{n+1}]=\mathcal E[\psi(t_{n+1})]+\mathcal{O}(\tau^{\min\{p,M+1\}+1}),\quad
            \mathcal M[\widehat{\psi}^{n+1}]=\mathcal M[\psi(t_{n+1})]+\mathcal{O}(\tau^{\min\{p,M+1\}+1}),
		\end{align*}
		and
		\begin{equation*}
			\mathcal P[\widehat{\psi}^{n+1}]=\mathcal P[\psi(t_{n+1})]+\mathcal{O}(\tau^{\min\{p,M+1\}+1}).
		\end{equation*}
		Since $\psi(t_{n+1})$ is the local solution of \eqref{DNLS-equi}, we have
		\begin{equation*}
			\psi(t_n)=\psi^n,\quad \mathcal E[\psi(t_{n+1})]=\mathcal E[\psi^n],\quad \mathcal M[\psi(t_{n+1})]=\mathcal M[\psi^n],\quad \mathcal P[\psi(t_{n+1})]=\mathcal P[\psi^n].
		\end{equation*}
		Thus, we can deduce
		\begin{equation*}
			\mathcal E[\widehat{\psi}^{n+1}]-\mathcal E[\psi^n]=\mathcal{O}(\tau^{\min\{p,M+1\}+1}),\ \mathcal M[\widehat{\psi}^{n+1}]-\mathcal M[\psi^n]=\mathcal{O}(\tau^{\min\{p,M+1\}+1}),
		\end{equation*}
and
\begin{equation*}
			\mathcal P[\widehat{\psi}^{n+1}]-\mathcal P[\psi^n]=\mathcal{O}(\tau^{\min\{p,M+1\}+1}).
		\end{equation*}
		On the other hand, we expand ${\bf G}(\tau,\bma)$ at $\bma={\textbf 0}$ as
		\begin{equation*}
			{\bf G}(\tau,\bma)={\bf G}(\tau,{\bm 0})+(J_\bma {\bf G})(\tau,{\bm 0})\bma+\mathcal{O}(|\bma|^2),
		\end{equation*}
		where
		\begin{equation*}
			{\bf G}(\tau,{\bm 0})=(\mathcal E[\widehat{\psi}^{n+1}]-\mathcal E[\psi^n],\mathcal M[\widehat{\psi}^{n+1}]-\mathcal M[\psi^n],\mathcal P[\widehat{\psi}^{n+1}]-\mathcal P[\psi^n])^{\top}=\mathcal{O}(\tau^{\min\{p,M+1\}+1}),
		\end{equation*}
and
		\begin{equation*}
			(J_\bma{\bf G})(\tau,{\bm 0})=(J_\bma {\bf G})(0,{\bm 0})+\mathcal{O}(\tau).
		\end{equation*}
		This implies
		\begin{equation}\label{eq:ESVM-alpha}
			\alpha_1=\alpha_1(\tau)=\mathcal{O}(\tau^{\hat{p}+1}),\ \alpha_2=\alpha_2(\tau)=\mathcal{O}(\tau^{\hat{p}+1}),\ \alpha_3=\alpha_3(\tau)=\mathcal{O}(\tau^{\hat{p}+1}),\ \hat{p}=\min\{p,M+1\}.
		\end{equation}
		Finally, we use \eqref{eq:ESVM-Implement} and \eqref{eq:ESVM-hat-psi}-\eqref{eq:ESVM-alpha} to obtain
		$\psi^{n+1}-\psi(t_{n+1})=\mathcal{O}(\tau^{\hat{p}+1})$.
	\end{prf}
	
	\begin{rmk}[Estimates on the magnitudes of supplementary variables $\beta_1$, $\beta_2$ and $\beta_3$]
		\label{esvm-rmk-alpha}
		According to \eqref{eq:ESVM-alpha}, we have the following estimates
		\begin{equation}\label{eq:ESVM-beta1-beta2-beta3}
			\beta_i^n = \mathcal{O}(\tau^{\hat{p}}),\quad i = 1,\ 2,\ 3,
		\end{equation}
		which means three variables approach zero with the same integer order $\hat{p}$.
		
\vspace{0.3cm}
		
		Actually, the equality \eqref{eq:ESVM-beta1-beta2-beta3} can been rewritten as
		$\beta_i^n = C_i(\psi^n) \tau^{\hat{p}} + \mathcal{O}(\tau^{\hat{p}+1})$.
		Note that $\psi^n = \psi(t_n) + \mathcal{O}(\tau)$ in the global error analysis, we then can deduce
		\begin{equation}\label{eq:ESVM-beta1-beta2-3}
			\beta_i^n = C_i(\psi(t_n)) \tau^{\hat{p}} + \mathcal{O}(\tau^{\hat{p}+1}),
		\end{equation}
		which implies immediately that the convergence orders are $\hat{p}$.

	\end{rmk}

\begin{rmk}Suppose that $g_1[\psi],\ g_2[\psi]$ and $g_3[\psi]$ are linearly independent and satisfy the Haar condition at the mesh grid $\{x_j\}_0^{N-1}$, then there exists a $\tau^*>0$ such that the nonlinear algebraic equations \eqref{eq:DNLS-algebraic1}-\eqref{eq:DNLS-algebraic3} admit a unique function $\bma=\bma(\tau)$ for all $\tau\in[0,\tau^*]$.

\vspace{0.3cm}
Notice that it follows from \eqref{fully-dis-J-ah}  \begin{align*}
&(J_{\bma,h}{\bf G}_h)(0,{\bm 0}):=\\
&~~2\left[\begin{array}{ccc}
 \|\nabla_{{\bm\psi}^\ast}\mathcal E_h[{\bm \psi}^n]\|_h^2& \Re\left(\nabla_{{\bm\psi}^\ast}\mathcal M_h[{\bm \psi}^n],\nabla_{{\bm\psi}^\ast}\mathcal E_h[{\bm \psi}^n]\right)_h& \Re\left(\nabla_{{\bm\psi}^\ast}\mathcal P_h[{\bm \psi}^n],\nabla_{{\bm\psi}^\ast}\mathcal E_h[{\bm \psi}^n]\right)_h\vspace{1mm}\\
\Re\left(\nabla_{{\bm\psi}^\ast}\mathcal E_h[{\bm \psi}^n],\nabla_{{\bm\psi}^\ast}\mathcal M_h[{\bm \psi}^n]\right)_h& \|\nabla_{{\bm\psi}^\ast}\mathcal M_h[{\bm \psi}^n]\|_h^2& \Re\left(\nabla_{{\bm\psi}^\ast}\mathcal P_h[{\bm \psi}^n],\nabla_{{\bm\psi}^\ast}\mathcal M_h[{\bm \psi}^n]\right)_h\vspace{1mm}\\
\Re\left(\nabla_{{\bm\psi}^\ast}\mathcal E_h[{\bm \psi}^n],\nabla_{{\bm\psi}^\ast}\mathcal P_h[{\bm \psi}^n]\right)_h& \Re\left(\nabla_{{\bm\psi}^\ast}\mathcal M_h[{\bm \psi}^n],\nabla_{{\bm\psi}^\ast}\mathcal P_h[{\bm \psi}^n]\right)_h& \|\nabla_{{\bm\psi}^\ast}\mathcal P_h[{\bm \psi}^n]\|_h^2
\end{array} \right],
\end{align*}
and $(J_{\bma,h}{\bf G}_h)(0,{\bm 0})$  is nonsingular if and only if $g_1[\psi],\ g_2[\psi]$ and $g_3[\psi]$ are linearly independent and satisfy the Haar condition at the mesh grid $\{x_j\}_0^{N-1}$. Then, by a similar argument as in Theorem \ref{ESVM:thm:dNLS}, we obtain the local existence and uniqueness of ${\bma}$ in \eqref{eq:DNLS-algebraic1}-\eqref{eq:DNLS-algebraic3}. However, an optimal error estimate for the proposed fully discrete scheme is somehow challenging and technically involve because of the multiple supplementary variables and the Lawson Runge-Kutta method. Importantly, some techniques presented in \cite{CSW-SIAMJNA-2025,YLFZ-CWA-2025} are highly referential.
	\end{rmk}

\section{Numerical experiments}\label{Sec:ESVM-DNLS:4}
In this section, we investigate the accuracy and energy-preserving behaviors of our method. For brevity, we only consider the mSP exponential integrator of order 4 (see Remark 2 in \cite{GJZ2024} for the coefficients of the prediction-correction Gauss Lawson RK methods), and the resulting scheme is abbreviated as {\bf  mSP4}. Additionally, we shall carry out a comprehensive comparison between our scheme with the existing conservative {\bf Scheme I-A} \cite{XZ-pd-2024} and {\bf CNS} \cite{GF-JAAC2021} schemes.

To quantify the numerical error, we define the $l^2$-error function and convergence order as, respectively
\begin{align*}
 e^{\tau,h}_{2}(t_{n}) = \Big(h\sum_{j=0}^{N-1} |\psi(x_j ,t_{n}) - \psi_j^{n} |^2\Big)^{1/2},\qquad {\rm Order}=\ln\left(e_{2}^{\tau_1,h}(t_n)/e_{2}^{\tau_2,h}(t_n)\right)/\ln(\tau_1/\tau_2).
\end{align*}
The residual functions on the mass, energy and momentum are defined as, respectively
\begin{align*}
e_{\mathcal W}(t_n)=\left|\mathcal W_h[{\bm\psi}^{n}]-\mathcal W_h[{\bm\psi}^{0}]\right|,\qquad \mathcal{W}_h= \mathcal M_h,\ \mathcal E_h,\  {\rm or}\  \mathcal P_h.
\end{align*}

In our computations, we set $M=5$ for the mSP method, and employ the fixed-point iteration to solve the coupled nonlinear equations of Scheme I-A and CNS, in which the convergence tolerance is set as $1.0\times 10^{-14}$. All numerical results are obtained by running MATLAB R2015b on a Win10 machine with Intel Core i7-9700 and 32 GB.

\begin{ex}[Accuracy test]\label{DNLS-exmp1} In this example, we first test the spatial and temporal accuracies of proposed numerical scheme for the wave function at $T=1$ of the DNLS equation \eqref{DNLS-equation}, and then some numerical comparisons between our schemes with the two existing conservative schemes are carried out. We note that the DNLS equation \eqref{DNLS-equation} admits an exact solution\cite{GLL-SAM2013,XZ-pd-2024}
\begin{align}\label{DNLS-two-solition1}
\psi(x,t)=g\cdot f^\ast/f^2,
\end{align} where
\begin{align*}
f&=1-\frac{3652820608655357}{2361183241434822606848}e^{4x+4}+(-\frac{1}{80}+\frac{\rm i}{8})e^{-\frac{2}{5}t+2x+2}+(\frac{1}{80}+\frac{\rm i}{8})e^{\frac{2}{5}t+2x+2}\\
&~~~~+(-\frac{5}{404}+\frac{25}{202}{\rm i})e^{2+(2-\frac{1}{5}{\rm i})x}+(\frac{5}{404}+\frac{25}{202}{\rm i})e^{2+(2+\frac{1}{5}{\rm i})x},\\
g&=\frac{{\rm i}}{50}\left[\left(\frac{25}{404}+\frac{5}{808}{\rm i}\right)e^{1+(-\frac{1}{5}-\frac{99}{100}{\rm i})t+\sigma_1}+\left(\frac{25}{404}-\frac{5}{808}{\rm i}\right)e^{1+(\frac{1}{5}-\frac{99}{100}{\rm i})t+\sigma_2}\right]e^{2x+2+\frac{99{\rm i}}{50}t}\\
&~~~~+e^{1+(-\frac{1}{5}+\frac{99}{100}{\rm i})t+\sigma_2}+e^{1+(\frac{1}{5}+\frac{99}{100}{\rm i})t+\sigma_1}
\end{align*}
with $\sigma_1 = (1-\frac{1}{10}{\rm i})x$ and $\sigma_2 = (1+\frac{1}{10}{\rm i})x$. The initial condition is always chosen as the exact solution \eqref{DNLS-two-solition1} at $t=0$ and $b=-a=50$.

\end{ex}
Table \ref{DNL-spatial-order} displays the spatial numerical error of mSP4 for the wave function $\psi(\cdot ,T=1)$. As illustrated, our method is {\em spectrally} accurate in space. Then, Tables \ref{DNL-temporal-order}-\ref{DNL-temporal-beta-order} present the temporal numerical error and the convergence order of mSP4 for the wave function $\psi(\cdot ,T=1)$ and the supplementary variables $\beta_1$, $\beta_2$ and $\beta_3$, respectively, from which one can see that (i) mSP4 is fourth-order in time; (ii) The supplementary
variables $\beta_1^n$, $\beta_2^n$ and $\beta_3^n$ can achieve fourth-order accuracy in time, which is consistent with the theoretical analysis in Remark \ref{esvm-rmk-alpha}.

\begin{table}[h]
\caption{Spatial numerical error of mSP4 for the wave function $\psi(\cdot ,T=1)$ with $\tau=10^{-5}$ in Example \ref{DNLS-exmp1}.}\label{DNL-spatial-order}
\begin{tabular}{c c c c c c }
  \hline
{} &{$h_0=100/128$}&{$h_0/2$} &{$h_0/4$}& {$h_0/8$}&{$h_0/16$}\\\hline
 {$e_{2}^{\tau,h}(T=1)$}&{3.194E-00}& {2.800E-01}&{1.603E-03} &{1.551E-08}&{2.857E-11}\\[1ex]\hline
\end{tabular}
\end{table}

\begin{table}[h]
\caption{Temporal numerical error and convergence order of mSP4 for the wave function $\psi(\cdot ,T=1)$ with $h=25/512$ in Example \ref{DNLS-exmp1}.}\label{DNL-temporal-order}
\begin{tabular}{c c c c c c}
  \hline
{} &{} &{$\tau_0=1/200$} &{$1/400$}& {$1/600$}&{$1/800$}\\
\hline
 {}  &{$e_{2}^{\tau,h}(T=1)$}&{3.502E-05}& {2.115E-06}&{4.151E-07}&{1.310E-07}\\[1ex]
  {}  &{Order}& {--}&{4.049}&{4.016} &{4.008}\\[1ex]\hline
\end{tabular}
\end{table}

\begin{table}[h]
\caption{Temporal numerical error and the convergence order of mSP4 for supplementary
variables $\beta_1^n$, $\beta_2^n$ and $\beta_3^n$ at final time $T=1$ with $h=25/512$ in Example \ref{DNLS-exmp1}.}\label{DNL-temporal-beta-order}
\begin{tabular}{ c c c c c c}
  \hline
{}  &{$\tau_0=1/300$} &{$1/400$}& {$1/500$}&{$1/600$}\\
\hline
 {$|\beta_1^n-0|$}  &{2.338E-06}& {7.283E-07}&{2.960E-07}&{1.430E-07}\\[1ex]
   {Order}& {--}&{4.054}&{4.034} &{3.989}\\[1ex]

  {$|\beta_2^n-0|$}  &{ 2.406E-06}& {7.501E-07}&{3.050E-07}&{1.474E-07}\\[1ex]
   {Order}& {--}&{4.051}&{4.033} &{3.988}\\[1ex]
  {$|\beta_3^n-0|$}  &{1.457E-08}& {4.953E-09}&{2.076E-09}&{1.019E-09}\\[1ex]
   {Order}& {--}&{3.751}&{3.896} &{3.906}\\[1ex]\hline
\end{tabular}
\end{table}

\begin{table}[h]
\caption{Numerical errors of different methods for the wave function $\psi(\cdot ,T=1)$ with different spatial and time steps in Example \ref{DNLS-exmp1}.}
\label{Tab:DNLS-1D-error}
\begin{tabular}{c c c c c c c c}\hline
{Method} &{}&{$(h_0=\frac{1}{10},\tau_0=\frac{1}{200})$}&{$(h_0/2,\tau_0/2)$} &{$(h_0/4,\tau_0/4)$} &{$(h_0/8,\tau_0/8)$}  \\
\hline
\multirow{2}{*}{mSP4}
  &{$e^{\tau,h}_{2}(T=1)$}&{3.502E-05}&{2.115E-06}&{1.310E-07}&{8.173E-09}
  \\[1ex]
  \multirow{2}{*}{CNS \cite{GF-JAAC2021}}
  &{$e^{\tau,h}_{2}(T=1)$}&{2.296E-00}&{8.140E-01}&{2.273E-01}&{5.855E-02}
  \\[1ex]
  \multirow{2}{*}{Scheme I-A \cite{XZ-pd-2024}}
  &{$e^{\tau,h}_{2}(T=1)$}&{8.170E-01}&{1.978E-01}&{4.891E-02}&{1.219E-02}
  \\[1ex]\hline
\end{tabular}
\end{table}

To compare the accuracy performance with the existing methods, Table \ref{Tab:DNLS-1D-error} shows numerical errors of different methods for the wave function $\psi(\cdot ,T=1)$ with different spatial and time steps, from which one can observe that, for a fixed spatial and time step, the mSP4 scheme produces the smallest numerical error, while the error produced by CNS is the largest.

To show the computational efficiency of the different methods for the wave function $\psi(\cdot ,T=1)$, Figure \ref{DNLS-CPU-time} displays the evolution of the CPU time
with respect to $e^{\tau,h}_{2}$ at final time $T=1$ with different spatial and time steps. It is clear to observe that for a given error, the CPU cost of mSP4 is the cheapest, while the one provided by CNS is the most expensive.

Finally, we summarize the evolution of the soliton solution, the dynamic conservation laws and the number of the Newton iteration at every time step in Figures \ref{DNLS-soliton1}-\ref{DNLS-exam1-NNI}. As illustrated, we can observe the following facts: (i) The proposed scheme can simulate the soliton well, however the profile of the soliton solution provided by CNS is wrong; (ii) The Scheme I-A can capture the amplitude and waveform, but the small disturbance emerges and it admits a large phase error; (iii) Our schemes can exactly conserve the mass, energy and momentum, however the Scheme I-A and CNS schemes can only exactly conserve the mass; (iv) The Newton
iteration method converges in less than 3 iterations. Additionally, Figure \ref{DNLS-time-spatial-steps} shows the numerical error $e^{\tau,h}_{2}$ with respect to different spatial and  time steps of different methods for the wave function $\psi(\cdot ,T=1)$, from which one can observe that, when the spatial step is insufficiently fine, the Scheme I-A and CNS schemes  will lose the accuracy, which implies that the oscillations and the large phase error of numerical solutions provided by the Scheme I-A and CNS schemes can be eliminated with a sufficiently fine spatial discretization.

\begin{figure}[H]
\centering \includegraphics[width=9.5cm,height=6.5cm]{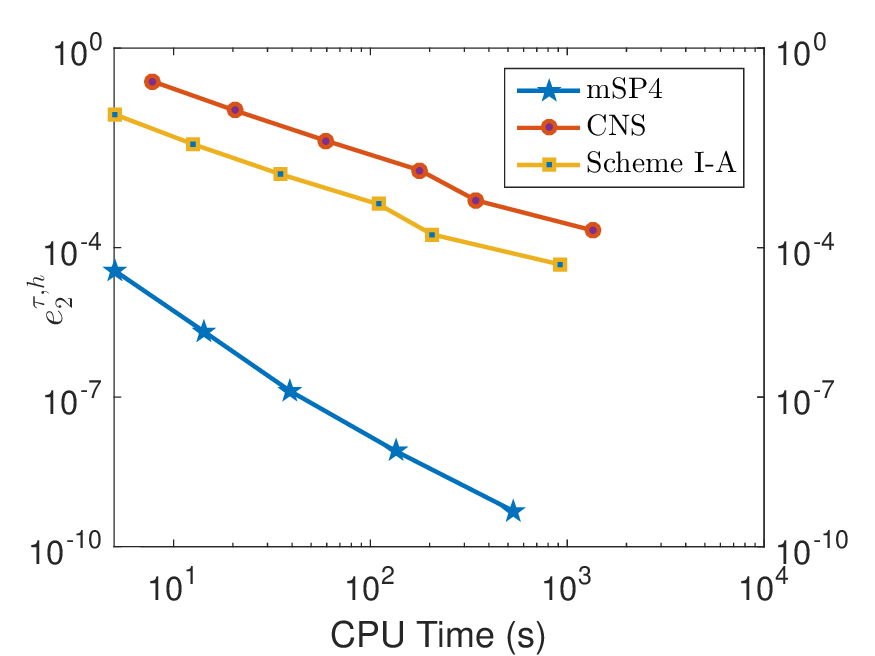}

\caption{The evolution of the CPU time
with respect to $e^{\tau,h}_{2}$ at final time $T=1$ in Example \ref{DNLS-exmp1}.}\label{DNLS-CPU-time}
\end{figure}

\begin{figure}[H]
\centering \includegraphics[width=4.0cm,height=4.0cm]{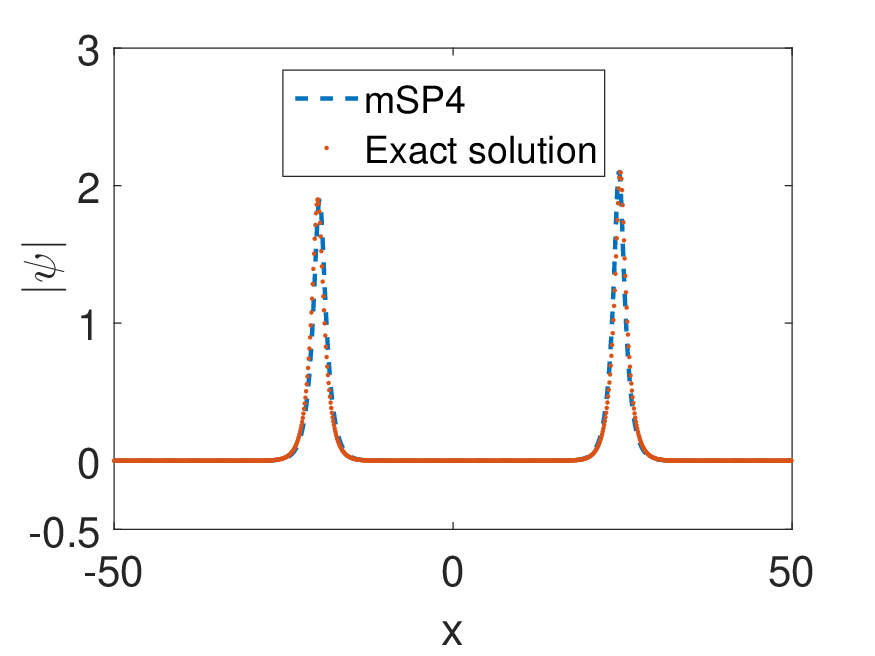}\quad
\includegraphics[width=4.0cm,height=4.0cm]{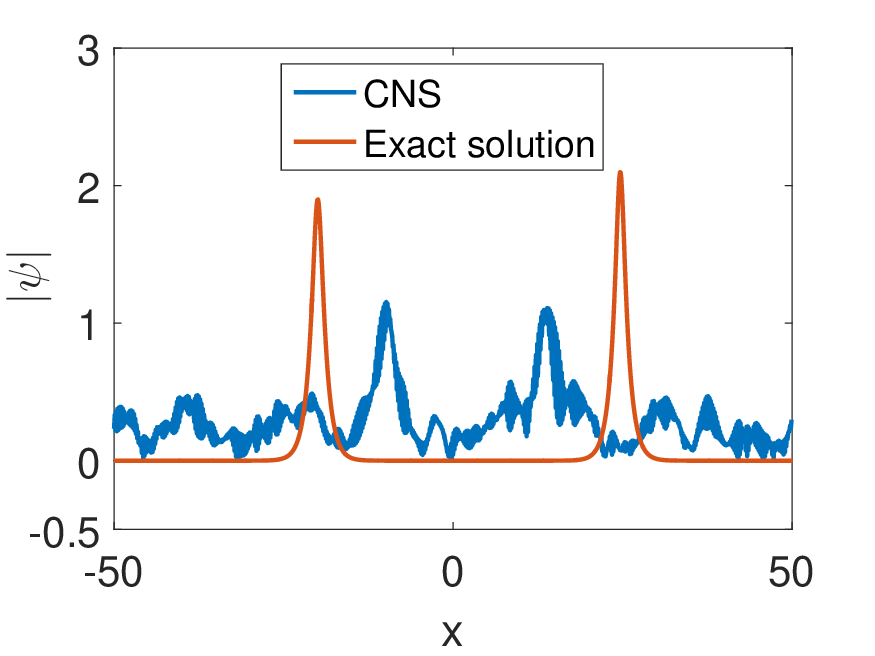}\quad
\includegraphics[width=4.0cm,height=4.0cm]{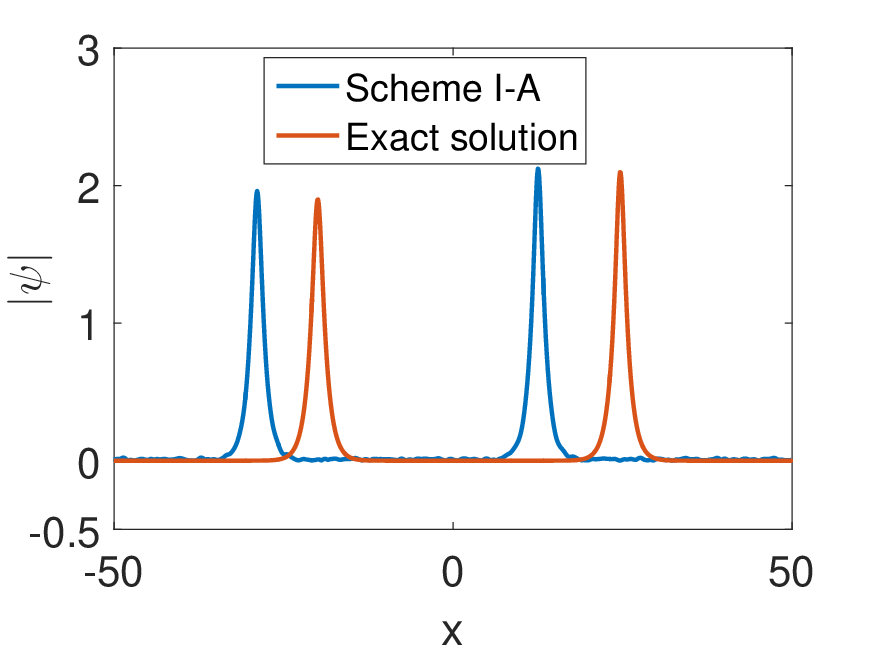}
\caption{The profile of the soliton solution for different schemes at $T=100$ with the spatial step $h=25/256$ and the time step $\tau=0.001$in Example \ref{DNLS-exmp1}.}\label{DNLS-soliton1}
\end{figure}

\begin{figure}[H]
\centering \includegraphics[width=4.0cm,height=4.0cm]{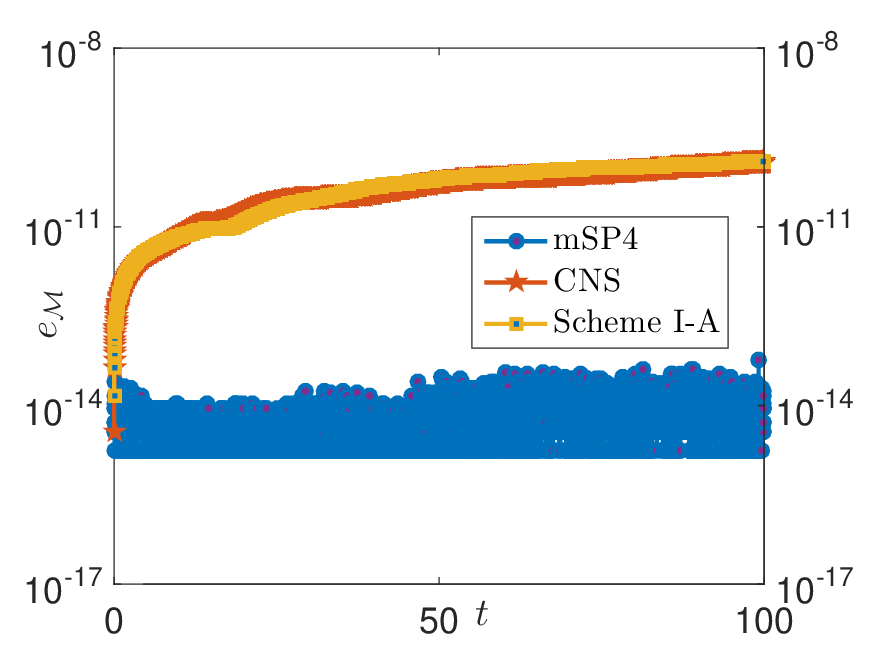}\quad
\includegraphics[width=4.0cm,height=4.0cm]{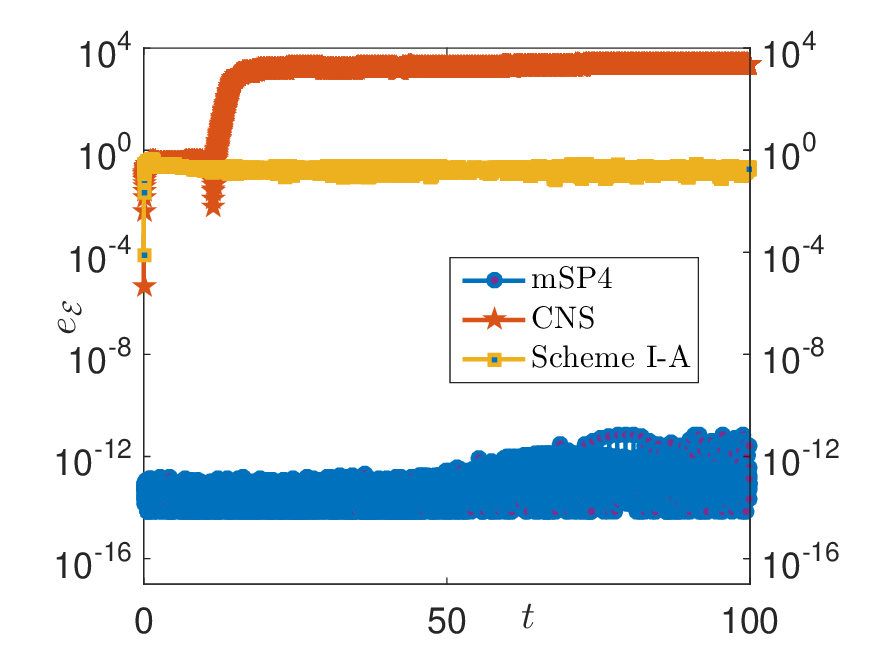}\quad
\includegraphics[width=4.0cm,height=4.0cm]{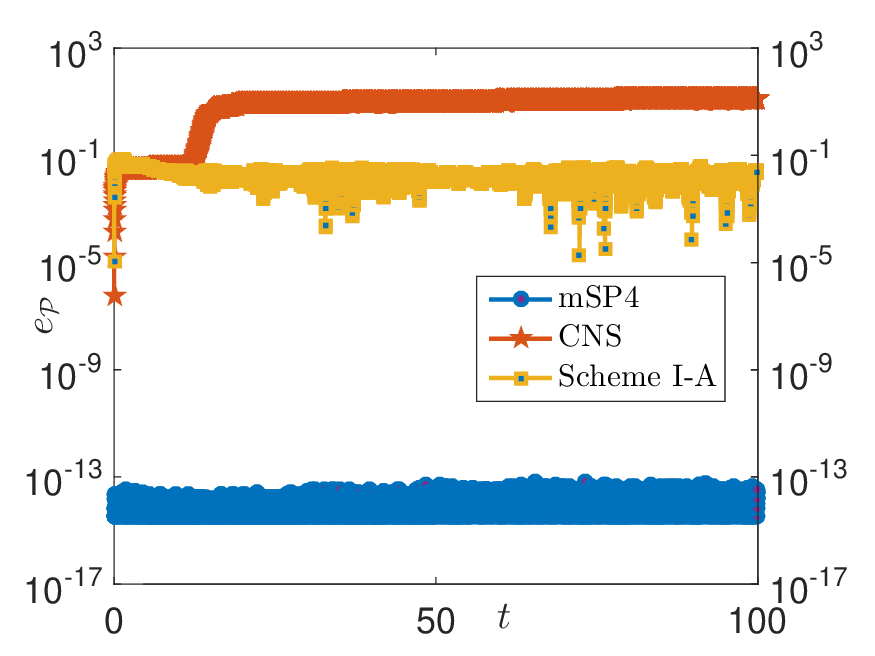}\\

\caption{The evolution of $e_{\mathcal{M}}$, $e_{\mathcal{E}}$ and $e_{\mathcal{P}}$~for different schemes over the time domain $t\in[0,100]$ with the spatial step $h=25/256$ and the time step $\tau=0.001$ in Example \ref{DNLS-exmp1}.}\label{DNLS-conservation-laws}
\end{figure}

\begin{figure}[H]
\centering \includegraphics[width=9.5cm,height=6.5cm]{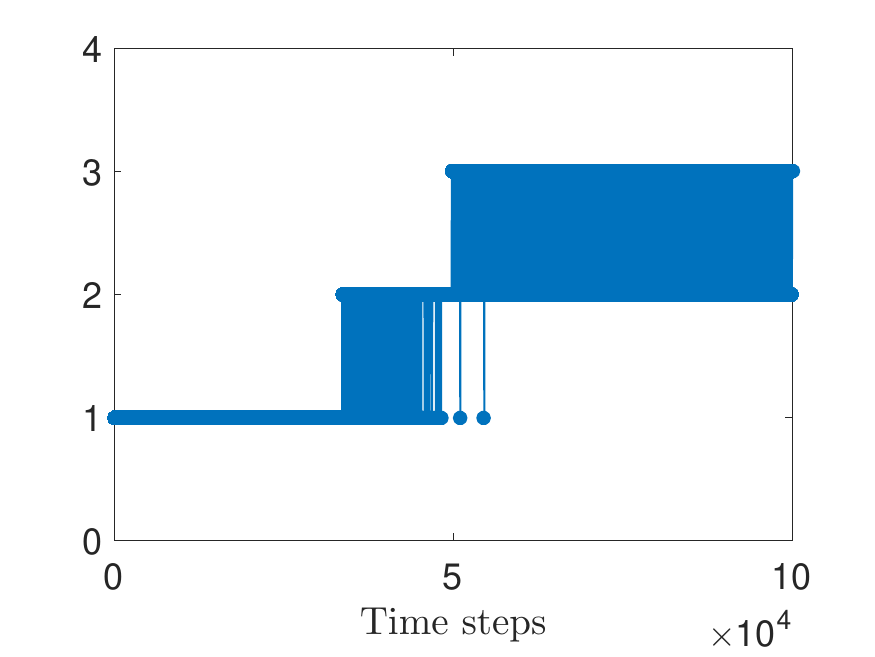}

\caption{The number of the
Newton iteration at every time
step with the spatial step $h=25/256$ and the time step $\tau=0.001$ in Example \ref{DNLS-exmp1}.}\label{DNLS-exam1-NNI}
\end{figure}

\begin{figure}[H]
\centering \includegraphics[width=4.0cm,height=4.0cm]{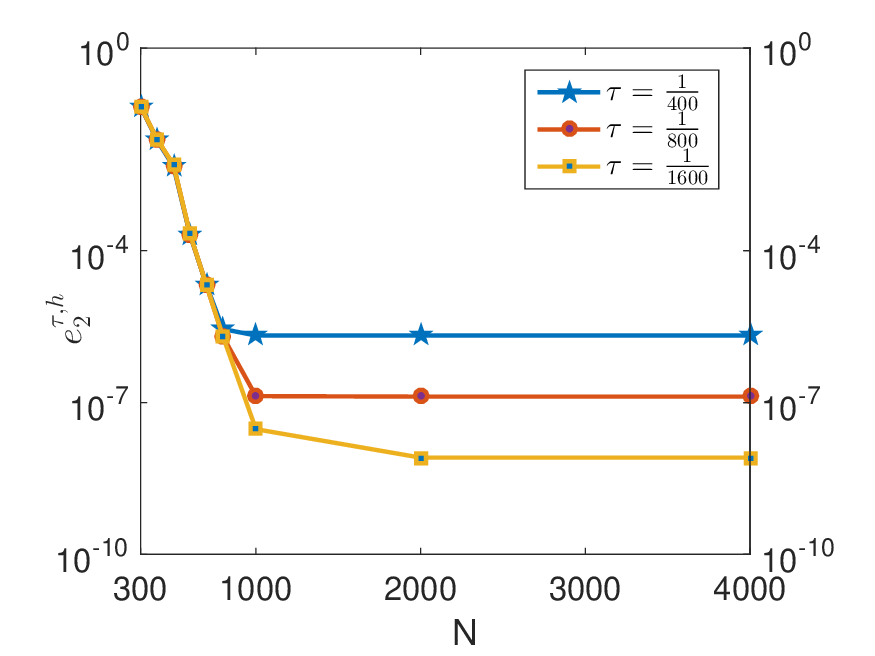}\quad
\includegraphics[width=4.0cm,height=4.0cm]{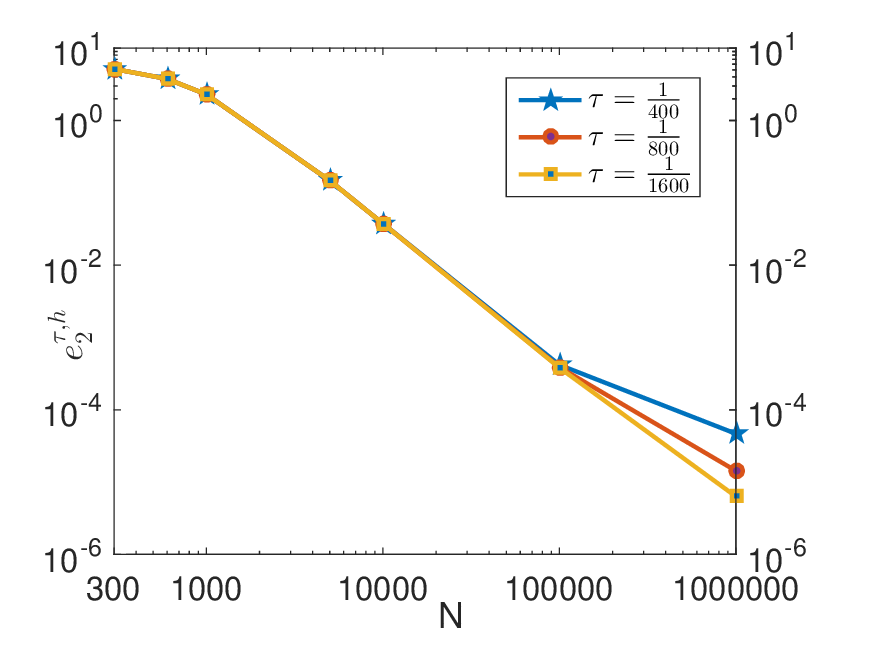}\quad
\includegraphics[width=4.0cm,height=4.0cm]{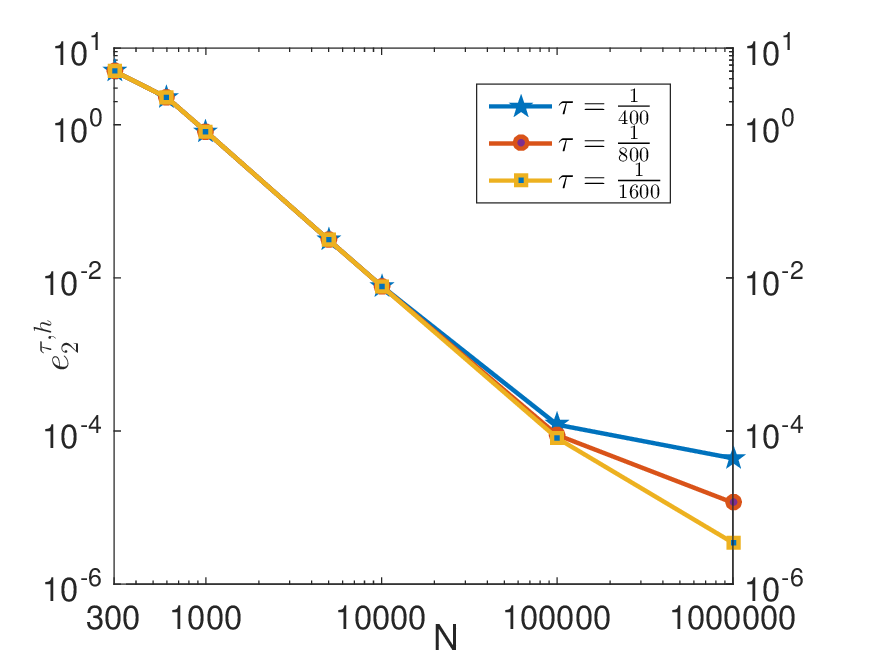}\\

\caption{The numerical error $e^{\tau,h}_{2}$ with respect to different spatial and time steps provided by mSP4 (left), CNS (middle) and Scheme I-A (right) for the wave function $\psi(\cdot ,T=1)$ in Example \ref{DNLS-exmp1}.}\label{DNLS-time-spatial-steps}
\end{figure}

\begin{ex}[Two-soliton solutions]\label{DNLS-exmp2} In this example, the proposed scheme will be employed to simulate the evolution of two-soliton solutions by choosing the following initial condition\cite{KN-JMP-1978}
\begin{align*}
\psi(x,t)|_{t=-10}=g\cdot f^\ast/f^2,
\end{align*} where
\begin{align*}
f&=1+\frac{{\rm i}k_1}{2(k_1+k_1^\ast)^2}e^{\eta_1+\eta_1^\ast}+\frac{{\rm i}k_1}{2(k_1+k_2^\ast)^2}e^{\eta_1+\eta_2^\ast}+\frac{{\rm i}k_2}{2(k_2+k_1^\ast)^2}e^{\eta_2+\eta_1^\ast}+\frac{{\rm i}k_2}{2(k_2+k_2^\ast)^2}e^{\eta_2+\eta_2^\ast}\\
&+\frac{-k_1k_2(k_1-k_2)^2(k_1^\ast-k_2^\ast)^2}{4(k_1+k_1^\ast)^2(k_1+k_2^\ast)^2(k_2+k_1^\ast)^2(k_2+k_2^\ast)^2}e^{\eta_1+\eta_1^\ast+\eta_2+\eta_2^\ast},\\
g&=e^{\eta_1}+e^{\eta_2}+\frac{-k_1^\ast(k_1-k_2)^2}{2(k_1+k_1^\ast)^2(k_2+k_1^\ast)^2}e^{\eta_1+\eta_1^\ast+\eta_2}
+\frac{-k_2^\ast(k_1-k_2)^2}{2(k_1+k_2^\ast)^2(k_2+k_2^\ast)^2}e^{\eta_1+\eta_2+\eta_2^\ast}
\end{align*}
with $\eta_i=k_ix+\omega_it+1,\ i=1,2$ and $k_1=1+0.3{\rm i},\ k_2=1-0.3{\rm i},\ \omega_i={\rm i}k_i^2,\ i=1,2$. The computational domain is chosen as $\Omega=[-50,50]$.

\end{ex}

Figure \ref{DNLS-soliton2} presents the numerical evolution of the physical mechanism involved in the mutual interaction of two
solitons produced by the mSP4 (left), CNS (middle) and Scheme I-A (right) schemes, respectively, from which one can observe that mSP4 can well capture the shape of the solitons, while the results provided by Scheme I-A and CNS are wrong. We note that, similar to Example \ref{DNLS-exmp1}, the shape of the solitons can be well captured with a sufficiently fine spatial discretization ($h=25/512$ and $h=25/1024$ for Scheme I-A and CNS, respectively in this example). In Figure \ref{DNLS-conservation-laws2}, we investigate the residuals on the mass, energy and momentum provided by the three conservative schemes, which implies that our scheme can exactly conserve the three conservation laws, however Scheme I-A and CNS can only exactly preserve the mass. Collectively, these findings demonstrate that our scheme exhibits superior numerical performance compared to the existing Scheme I-A and CNS schemes. Finally, we investigate the corresponding number of the Newton iteration at every time step in Figure \ref{DNLS-exam2-NNI}, which implies that the Newton
iteration method converges in less than 3 iterations.

\begin{figure}[H]
\centering\includegraphics[width=4.0cm,height=4.0cm]{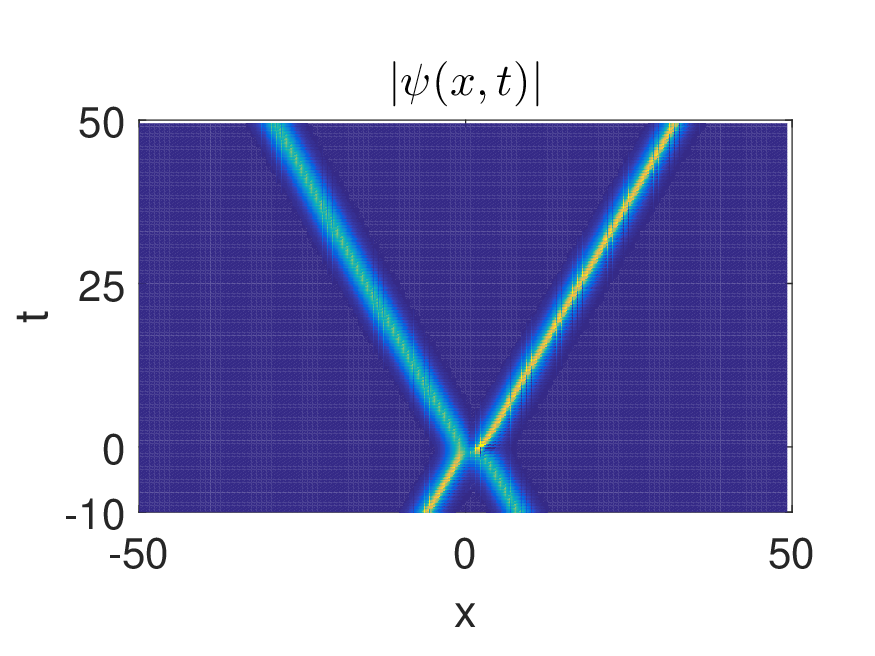}\quad
	\includegraphics[width=4.0cm,height=4.0cm]{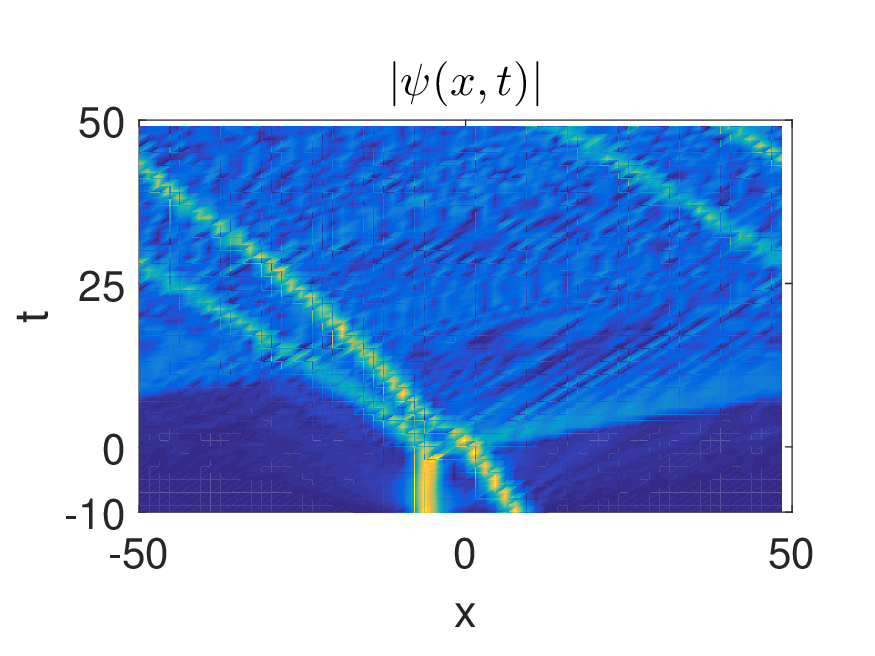}\quad
\centering\includegraphics[width=4.0cm,height=4.0cm]{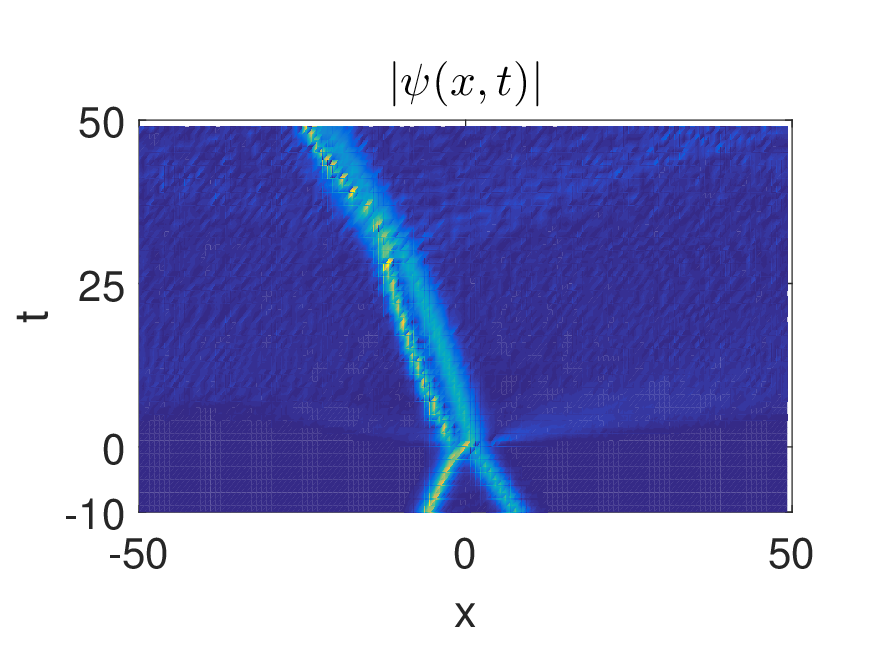}
\caption{The numerical evolution of two soliton solutions produced by mSP4 (left), CNS (middle) and Scheme I-A (right) with the spatial step $h=25/256$ and the time step $\tau=0.001$ in Example \ref{DNLS-exmp2}.}\label{DNLS-soliton2}
\end{figure}

\begin{figure}[H]
\centering \includegraphics[width=4.0cm,height=4.0cm]{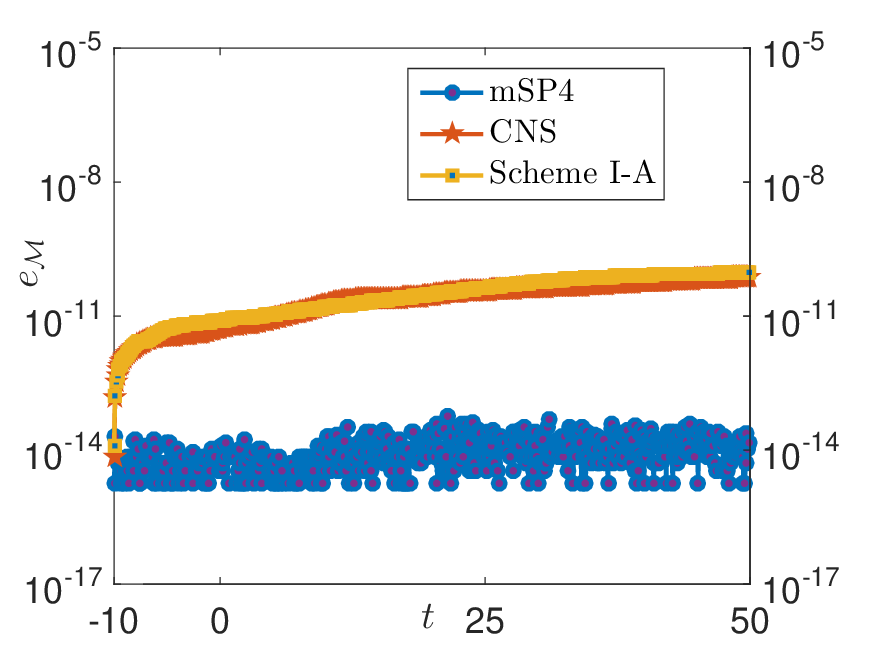}\quad
\includegraphics[width=4.0cm,height=4.0cm]{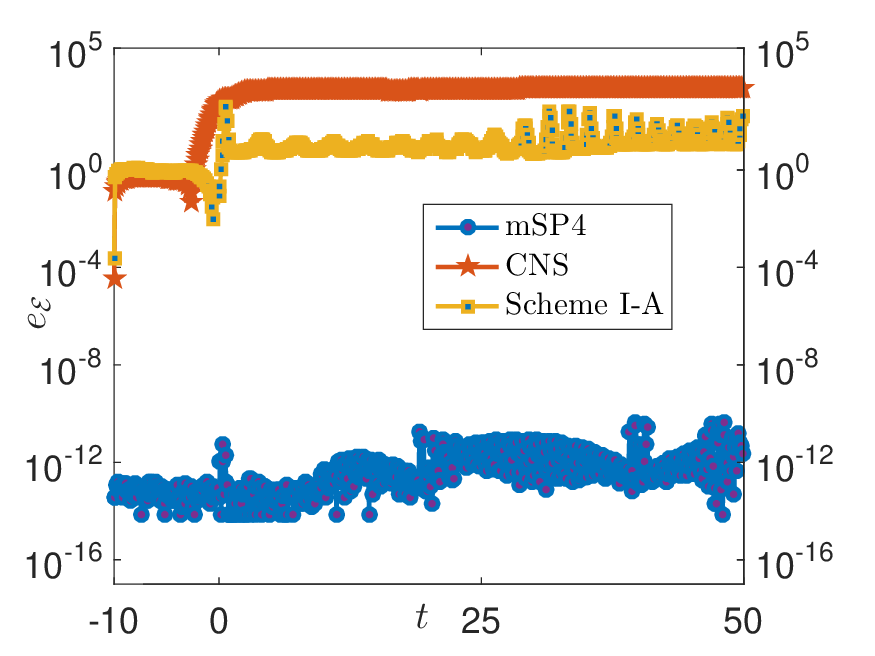}\quad
\includegraphics[width=4.0cm,height=4.0cm]{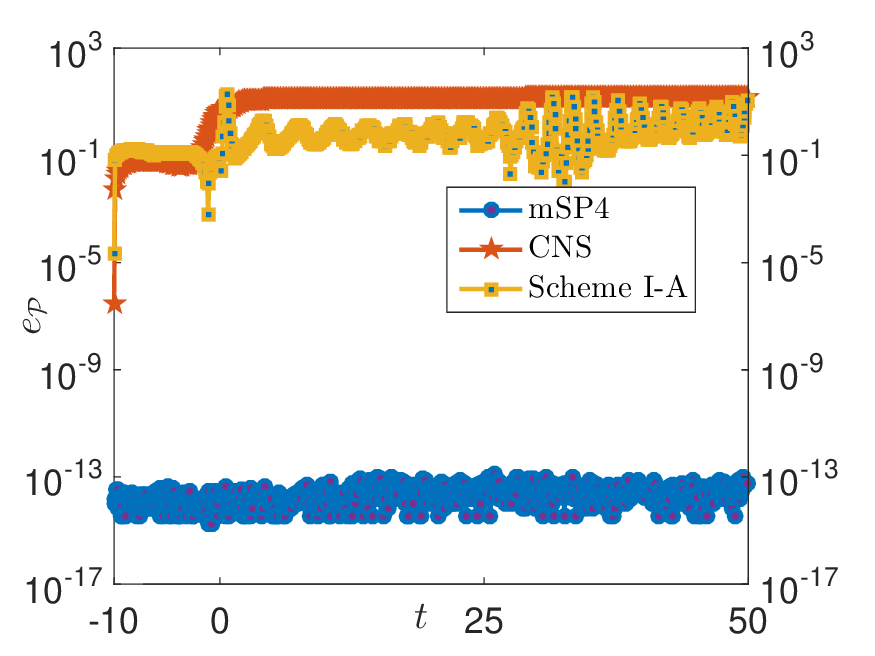}\\

\caption{The evolution of $e_{\mathcal{M}}$, $e_{\mathcal{E}}$ and $e_{\mathcal{P}}$~for different schemes over the time domain $t\in[-10,50]$ with the spatial step $h=25/256$ and the time step $\tau=0.001$ in Example \ref{DNLS-exmp2}.}\label{DNLS-conservation-laws2}
\end{figure}

\begin{figure}[H]
\centering \includegraphics[width=6.5cm,height=5.5cm]{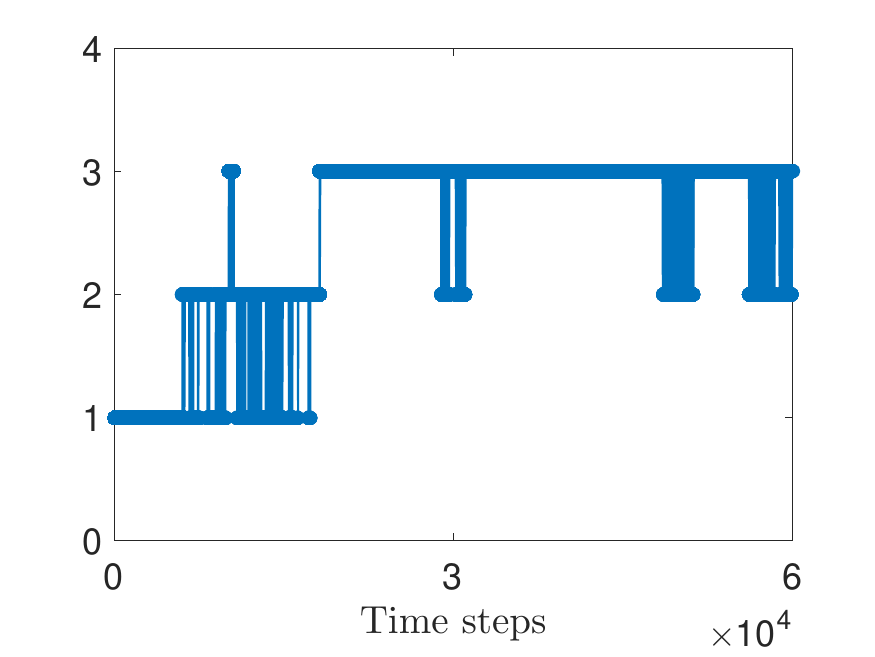}
\caption{The number of the
Newton iteration at every time
step  with the spatial step $h=25/256$ and the time step $\tau=0.001$ in Example \ref{DNLS-exmp2}}.\label{DNLS-exam2-NNI}
\end{figure}

\section{Concluding remarks}\label{Sec:ESVM-DNLS:5}

In this paper, we propose a class of high-order multi-structures-preserving exponential integrators for the DNLS equation \eqref{DNLS-equation} based on the idea of the ESVM approach. The proposed scheme is efficient and easy to implement and can simultaneously conserve the mass, energy and momentum. Numerical results are presented to confirm the accuracy and long time evolution of dynamical conservation laws.
Compared with the existing conservative schemes, our scheme has an advantage on the accuracy, robustness and preservation of dynamic conservation laws. Moreover,
 with the aid of the gauge transformation,
\begin{align*}
u(x,t)=\psi(x,t)e^{\frac{{\rm i}}{2}\int_{-\infty}^x|\psi(y,t)|^2dy},
\end{align*}
the DNLS equation  \eqref{DNLS-equation} reduces to the second-type derivative nonlinear Schr\"odinger equation or Chen-Lee-Liu equation \cite{CLL-PS-1979}
\begin{align*}
 &\text{i}\frac{\partial}{\partial t}u(x,t)=-\partial_{xx}u(x,t)-{\rm i}\big(|u(x,t)|^2\partial_xu(x,t)\big),\quad  t>0,
\end{align*}
which satisfies the mass, energy and momentum similar to the DNLS equation \eqref{DNLS-equation}\cite{LLS-IJNSNS-2018,XZM-AML-2025,XZ-pd-2026}.
We note that the proposed strategies can be extended to construct multi-structures-preserving exponential integrators for the second-type DNLS equation.

\backmatter

\bmhead{Acknowledgements}

The authors would like to express sincere gratitude to the referees for their insightful comments and suggestions.

\section*{Declarations}
\begin{itemize}
\item Funding This work is supported by the National Natural Science Foundation of China (Grant No. 62262068) and Yunnan Fundamental Research Project (Grant Nos. 202401AT070283).\vspace{0.1cm}
\item Conflict of interest/Competing interests The authors declare no competing interests.\vspace{0.1cm}
\item Data availability No datasets were generated or analysed during the current study. \vspace{0.1cm}
\item Author contribution Liping Wu: designed the numerical algorithm and wrote the
main manuscript tex. Li Yang: validation, conducted numerical experiments. Chaolong Jiang: designed the
numerical algorithm, conducted numerical experiments and wrote the main manuscript text.
\end{itemize}

\end{document}